\newtheorem{theorem}{Theorem}[section]
\newtheorem{conjecture}{Conjecture}[section]
\newtheorem{definition}{Definition}[section]
\newtheorem{example}{Example}[section]
\newcommand*\modd[3]{#1\equiv#2\;( {\rm mod}\;  #3)}
\newcommand*\notmodd[3]{#1\not\equiv#2\;( {\rm mod}\;  #3)}
\title{An Extension of the Collatz Conjecture modulo  $2^p+2^q$}
\author{Abderrahman  Bouhamidi\footnote{L.M.P.A, Universit\'e du
		Littoral, 50 rue F. Buisson BP699, F-62228 Calais-Cedex, France.  \href{mailto:abderrahman.bouhamidi@univ-littoral.fr}{abderrahman.bouhamidi@univ-littoral.fr}}}
\date{November 5,  2025}
\begin{document}
\maketitle

\abstract{In this paper, we will introduce  an extension to the Collatz's conjecture.  This  conjecture  may be seen as a general conjecture that unifies   the Collatz one together with many other similar conjectures.  For instance, we propose our new conjecture modulo $10$ which may be stated   as follows.  Starting from any  positive integer,
	if it  is a multiple of $10$   then divide it by 10, otherwise,  multiply it by $12$, add $8$ times  its  last digit and divide the result  by $10$. Repeat the process infinitely. Regardless the starting number, the process eventually reaches $4$  after a finite number of iterations.  The genaral conjecture studied here will encompasse  the classical Collatz conjecture togher  with our proposed one modulo $10$.
}




\maketitle

\section{Introduction}\label{sec1}
The Collatz conjecture (stated since $1937$) is one of the famous unsolved problem in mathematics.   This conjecture is also  well known as  the $3n+1$ problem and  is formulated as follows: Starting from any initial integer  $\geq 1$, if the  number  is even divide it by $2$ else multiply it by $3$ and add $1$.  The Collatz conjecture asserts  that repeating  the process iteratively, it always reaches   $1$.  A survey and extensive of literature  on this subject has been given  in \cite{Lagarias1985,Lagarias2011}.  Consider the mapping $C:\mathbb{N}:\longrightarrow \mathbb{N}$ given by
\begin{equation}\label{CollatzMap}
	C(n)=\left\{\begin{array}{lcc}
		n/2&\mathrm{if}& \modd{n}{0}{2}\\ 
		(3n+1)/2&\mathrm{if}& \notmodd{n}{0}{2},\\
	\end{array}\right.
\end{equation}
where $\mathbb{N}=\{1,2,3,\ldots\}$.
The Collatz conjecture may be reformulated as following: For any $n\in\mathbb{N}$, there is a positive integer $k\in\mathbb{N}_0:=\mathbb{N}\cup\{0\}$ such that  $C^{(k)}(n)=1$, where the notation $C^{(k)}$ stands for the $k$-th iterate of the mapping $C$. 
It is also conjectured that the trivial cycle $(1\rightarrow 2\rightarrow 1)$ is  the unique cycle of the Collatz operator $C$.   The Collatz conjecture has been  verified experimentally with  computer  by many authors. For instance,  in  \cite{Oliviera2010}, the author claims that he verified the Collatz conjecture  in $2009$  up to 
$2^{62}\simeq 4.61\times 10^{18}$ and in  
\cite{Barina2025}, the author claims that he verified    the  conjecture  in $2025$  up to  $2^{71}\simeq 2.36\times 10^{21}$.

Several authors have proposed important generalizations of the Collatz  mapping, see \cite{Allouche1978,Crandall1978,Lagarias1985, MatthewsWatts1984}. Terras \cite{Terras1976} introduced the concept of stopping time. He also proved that almost all integers have finite stopping time, see also \cite{Everett1977}. In \cite{Eliahou1993}, Eliahou obtained  lower bound  lengths of hypothetic cycle. Recently, Tao \cite{Tao2020} established  that almost all orbits of the Collatz map attain almost bounded values,

In this paper, we propose a general  extension of the Collatz conjecture that unifies many Collatz-like maps. We begin by introducing our first simple case conjecture in a manner that is accessible to a broad audience, including non-mathematicians.
A preliminary and modest computational verification has been carried out for the validation of this conjecture; more details are provided in the following section.

\begin{conjecture}\label{Myconjecture}
	Starting with any positive integer  $n\geq 1$:
	\begin{itemize}
		\item[$\bullet$]  If $n$  is a multiple of $10$  then divide the number by $10$.  Otherwise, 
		multiply it by $12$, add  $8$ times  its  last digit and divide the resultby $10$.
		\item Repeat the process infinitely.
	\end{itemize}
	Then,  regardless the starting number, the process eventually reaches $4$  after a finite number of iterations.
\end{conjecture}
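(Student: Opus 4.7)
The natural first step is to put the iteration into a clean algebraic form. Writing $n = 10q + r$ with $r \in \{0, 1, \dots, 9\}$, the map $T$ sends $n$ to $q$ when $r = 0$ and to $(12n + 8r)/10 = 12q + 2r$ when $r \neq 0$; in particular it is always integer-valued on $\mathbb{N}$, so no rounding argument is needed. A direct calculation reveals the attracting cycle $4 \to 8 \to 16 \to 24 \to 32 \to 40 \to 4$, so the conjecture splits into two sub-problems: \emph{(a)} no orbit escapes to infinity, and \emph{(b)} no orbit falls into a cycle other than this one.

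My plan for (a) is a Terras-style stopping-time analysis. When $r \neq 0$ the ratio $T(n)/n$ is asymptotically $6/5$, whereas when $r = 0$ it equals $1/10$. Under the heuristic that the last digit is uniformly distributed along a generic orbit, the expected logarithmic increment per step is $(9/10)\log(6/5) + (1/10)\log(1/10) \approx -0.066 < 0$, so generic orbits should decrease geometrically. To make this rigorous one tracks the residue of $n$ modulo $10^k$: the value of $T^{(k)}(n)/n$ then depends only on that residue, and one shows, in the spirit of~\cite{Terras1976}, that the natural density of integers $n$ with $T^{(k)}(n) < n$ tends to $1$ as $k \to \infty$. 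An iterated descent argument would then force almost every orbit into a bounded region, after which one invokes a finite check to see that every sufficiently small starting value reaches $4$.

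For (b) I would follow Eliahou~\cite{Eliahou1993}: assuming a hypothetical cycle of length $\ell$ with $a$ steps of the first type and $b = \ell - a$ steps of the second type, the cycle condition produces a Diophantine identity of the shape $(12/10)^b (1/10)^a \prod(1 + \varepsilon_i / n_i) = 1$; expanding it yields a small linear form in $\log 10$ and $\log(6/5)$, on which a Baker-type lower bound would impose an enormous lower bound on $\ell$. Combined with an explicit computational verification on an initial segment of $\mathbb{N}$, this should rule out all short cycles. \emph{The chief obstacle}, however, is unchanged from the classical $3n+1$ problem: bridging the gap between \emph{almost all} and \emph{all} integers, and eliminating the possibility of a divergent trajectory altogether. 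I do not expect the plan above to close this last gap; the paper's own framing of the iteration through $2^p + 2^q$ strongly suggests that the present conjecture lies at the same level of intrinsic difficulty as Collatz itself, so one should realistically aim for a density theorem together with partial cycle exclusion rather than a complete proof.
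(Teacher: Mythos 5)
You have not produced a proof, and neither does the paper: the statement you were asked to prove is stated there as Conjecture~\ref{Myconjecture} and is supported only by computational verification (the author reports checking the triplet $(10,12,8)_+$ up to $n \le 6.5\times 10^{9}$ using the sufficient descent criterion that every orbit eventually satisfies $T^{(k)}(n) < n$, plus the stopping-time identities of Section~\ref{Sec:Toatal Stopping Time} as potential sieve accelerators). So there is no ``paper proof'' to compare against; your submission is a research program, and you say so yourself in the final paragraph. Within that frame, your preliminary algebra is correct and useful: writing $n = 10q+r$ gives $T(n) = 12q + 2r$ on the expanding branch, which confirms integrality, and your drift computation $(9/10)\log(6/5) + (1/10)\log(1/10) < 0$ is the right heuristic analogue of the Collatz $\tfrac12\log\tfrac32 - \tfrac12\log 2 < 0$. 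The proposed Terras-style density argument and the Baker/Eliahou-style cycle exclusion are the standard partial results one would expect to transfer, and the paper's own framing (the $(2^p+2^q)$ family containing $(2,3,1)_+$ as the case $p=q=0$) supports your judgment that the problem is of Collatz-level difficulty.

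Two concrete cautions if you pursue the program. First, the uniformity heuristic on the last digit is not literally available here: the formula $T(10q+r) = 12q+2r$ shows that every output of the expanding branch is even, so the residues $[T^{(i)}(n)]_{10}$ are emphatically not equidistributed on $\{0,\dots,9\}$ along an orbit; a correct Terras-type analysis must work with the actual transition structure modulo powers of $10$ (or modulo $2$ and $5$ separately), not with the naive independence assumption. Second, your cycle identity should be checked against the paper's broader Conjecture~\ref{MainConjecture2p2q}, where several nearby triplets in the same family (e.g.\ $(5,6,4)_+$, $(36,40,30)_+$) do possess nontrivial cycles; any linear-forms-in-logarithms bound you derive must therefore be sharp enough to distinguish $(10,12,8)_+$ from its neighbors, which is a warning sign that soft estimates will not suffice. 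Neither caution invalidates your plan, but both mean the ``almost all'' and ``no other cycle'' halves each contain an unclosed gap, exactly as you acknowledge.
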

The previous conjecture is in fact associated to the operator
$T:~\mathbb{N}~\longrightarrow~\mathbb{N}$ given by
\begin{equation}\label{mapT0}
	T(n)=\left\{\begin{array}{ccc}
		n/10&\mathrm{if}& \modd{n}{0}{10}\\
		(12 n+8 [ n]_{10})/10&\mathrm{if}& \notmodd{n}{0}{10},\\
	\end{array}\right.\end{equation}
or
\begin{equation}\label{mapT01}
	T(n)=\left\{\begin{array}{ccc}
		n/10&\mathrm{if}& \modd{n}{0}{10}\\
		(6 n+4 [ n]_{10})/5
		&\mathrm{if}& \notmodd{n}{0}{10}.\\
	\end{array}\right.
\end{equation}
where the notation $[n]_d$ denotes the remainder of $n$ in the Euclidean division of $n$ by $d\geq 2$ with the standard condition   $0\leq [n]_d<d$. For $d=10$, the remainder $[n]_{10}$ is exactly  the last digit of the number $n$ in the decimal base.  Our conjecture may now be reformulated more precisely as following:  For any $n\in\mathbb{N}^*$, there is an integer $k\in\mathbb{N}$ such that  $T^{(k)}(n)=4$.  The trivial cycle $$(4\rightarrow  8\rightarrow 16\rightarrow  24\rightarrow 32\rightarrow  40\rightarrow 4)$$ is  the unique cycle of the operator $T$. Let us, give two  examples: Starting with the integers $n=75$ and $n=135$ leads to the trajectories

\begin{itemize}
	\item $n = 75\rightarrow 94\rightarrow 116\rightarrow 144\rightarrow 176\rightarrow 216\rightarrow 264\rightarrow 320\rightarrow 32\rightarrow 40\rightarrow \textbf{4} \rightarrow \textbf{8} \rightarrow \textbf{16} \rightarrow \textbf{24} \rightarrow \textbf{32} \rightarrow \textbf{40} \rightarrow \textbf{4}$
	\item $n = 135 \rightarrow 166 \rightarrow 204 \rightarrow 248 \rightarrow 304 \rightarrow 368 \rightarrow 448 \rightarrow 544 \rightarrow 656 \rightarrow 792 \rightarrow 952 \rightarrow 1144 \rightarrow 1376 \rightarrow 1656 \rightarrow 1992 \rightarrow 2392 \rightarrow 2872 \rightarrow 3448 \rightarrow 4144 \rightarrow 4976 \rightarrow 5976 \rightarrow 7176 \rightarrow 8616 \rightarrow 10344 \rightarrow 12416 \rightarrow 14904 \rightarrow 17888 \rightarrow 21472 \rightarrow 25768 \rightarrow 30928 \rightarrow 37120 \rightarrow 3712 \rightarrow 4456 \rightarrow 5352 \rightarrow 6424 \rightarrow 7712 \rightarrow 9256 \rightarrow 11112 \rightarrow 13336 \rightarrow 16008 \rightarrow 19216 \rightarrow 23064 \rightarrow 27680 \rightarrow 2768 \rightarrow 3328 \rightarrow 4000 \rightarrow 400 \rightarrow 40 \rightarrow \textbf{4} \rightarrow \textbf{8} \rightarrow \textbf{16} \rightarrow \textbf{24} \rightarrow \textbf{32} \rightarrow \textbf{40} \rightarrow \textbf{4}$
\end{itemize}
In Figure~\ref{fig_Traj75-135},
the graphs represent the trajectories  starting from $75$ and from $135$, respectively. 

\begin{figure}[h]%
	\centering
	\begin{tabular}{cc}
		\includegraphics[width=5cm,height=5cm]{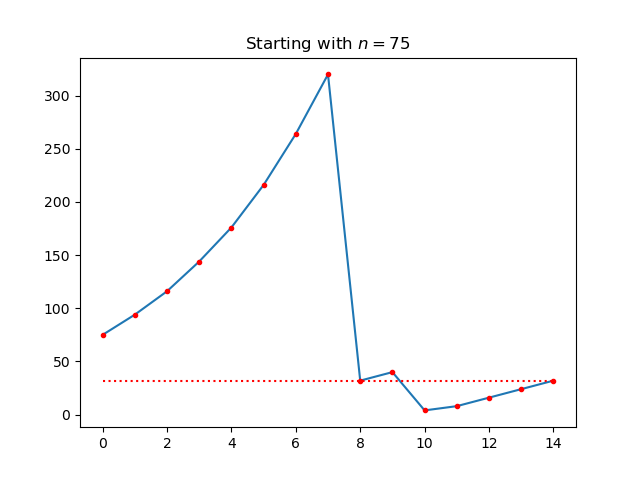}&
		\includegraphics[width=5cm,height=5cm]{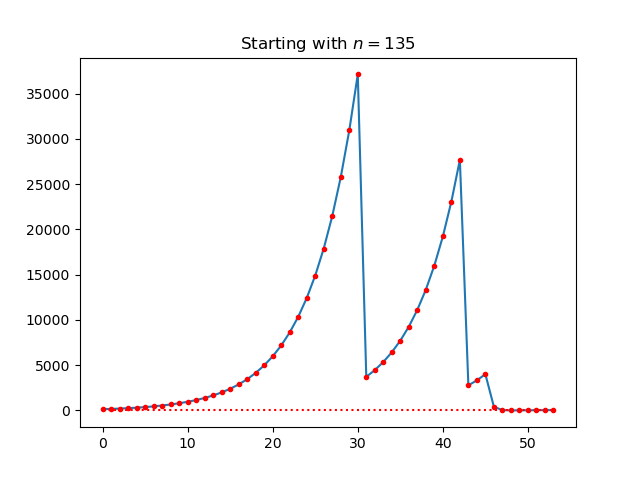}
	\end{tabular}
	\caption{Trajectories : Strating from $n=75$ (left) and from  $n=135$ (right)}\label{fig_Traj75-135},
\end{figure}

We have discussed, in \cite{Bouhamidi_Hal2024},  a general   extension to the classical Collatz conjecture by considering the general following operator $T:\mathbb{N}:\longrightarrow \mathbb{N}$ given by
\begin{equation}\label{mapT}
	T(n)=\left\{\begin{array}{ccc}
		n/d&\mathrm{if}& \modd{n}{0}{d}\\
		(\alpha n+\beta [\kappa_0 n]_d)/d&\mathrm{if}& \notmodd{n}{0}{d},\\
	\end{array}\right.
\end{equation}
where  $\alpha> d \geq2$  are two  positive integers, $\beta$ is  another integer which may be negative and $\kappa_0=\pm1$.  The integers  $d$, $\alpha$ and $\beta$  are  such that:   $\notmodd{\alpha}{0}{d}$ and  $\notmodd{\beta}{0}{d}$.   The triplet $(d,\alpha,\beta)$ associated to $T$  will be  denoted by $(d,\alpha,\beta)_{\pmb{+}}$ and 
$(d,\alpha,\beta)_{\pmb{-}}$ for $\kappa_0=+1$ and for $\kappa_0=-1$, respectively. A necessary and sufficient condition that the map $T:\mathbb{N}:\longrightarrow \mathbb{N}$  is well defined from $ \mathbb{N}$ into $\mathbb{N}$ is that the condition (see \cite{Bouhamidi_Hal2024}):
\begin{equation}\label{CdtCollatztriplet}
	\alpha+\kappa_0\beta>\dfrac{(\kappa_0-1)}{2}\quad\textrm{and}\quad \modd{\alpha+\kappa_0\beta}{0}{d},
\end{equation}
hold. 
Given $n\in \mathbb{N}$, the  trajectory of $n$ is the set $\Gamma(n)=\{n,T(n),T^{(2)}(n),\ldots\}$ of iterates starting by $n$.  A cycle (if it exists),  having $k$ elements (or vertices),  associated to the triplet $(d,\alpha,\beta)_{\pmb{\pm}}$  is a finite set  $\Omega $  for which $T^{(k)}(x)=x$ for all $x\in \Omega$.  It  will denoted by  
$\Omega(\omega)$ where $\omega$ is the smallest element in  the cycle. 
We will also use this abusive following  notation 
$$(\omega \rightarrow T(\omega)\rightarrow T^{(2)}(\omega)\rightarrow \ldots \rightarrow T^{(k-1)}(\omega)\rightarrow \omega),$$
to denote  the  cycle $\Omega(\omega)$.  We denote by $\mathrm{Cycl}(d,\alpha,\beta)_{\pmb{\pm}}$ the set of cycles associated to the triplet $(d,\alpha,\beta)_{\pmb{\pm}}$. The order of a triplet $(d,\alpha,\beta)_{\pmb{\pm}}$ is the number of its cycles, namely the cardinal  $\#\mathrm{Cycl}(d,\alpha,\beta)_{\pmb{\pm}}$ of the set $\mathrm{Cycl}(d,\alpha,\beta)_{\pmb{\pm}}$. It may be  infinite, zero or  finite integer. We have the following definition.

\begin{definition}
	Let $(d,\alpha,\beta)_{\pmb{\pm}}$  be a triplet with its associated  mapping   $T$ given by  \eqref{mapT}.  We will say that the triplet $(d,\alpha,\beta)_{\pmb{\pm}}$  is 
	\begin{itemize}
		\item  \textbf{admissible}   
		if and only if $(d,\alpha,\beta)_{\pmb{\pm}}$
		satisfies the condition  \eqref{mANDalphaBis} and it has at least one cycle. So,  $1\leq \#\mathrm{Cycl} (d,\alpha,\beta)_{\pmb{\pm}}$.
		\item   \textbf{weakly  admissible}   if and only if $(d,\alpha,\beta)_{\pmb{\pm}}$ is admissible and  has a finite number of cycles.  So,  $1\leq \#\mathrm{Cycl} (d,\alpha,\beta)_{\pmb{\pm}}<+\infty$. It may have divergent trajectories.
		\item   \textbf{strongly  admissible}  if and only if $(d,\alpha,\beta)_{\pmb{\pm}}$  is  weakly  admissible   without any divergent trajectory.  Namely, the triplet  has a finite number of   cycles without any divergent trajectory.
	\end{itemize} 
\end{definition}

The classical Collatz conjecture may now be reformulated as following:
\begin{conjecture}[of Collatz]
	The triplet $(2,3,1)_+$ is  strogly admissible  of order one.  Its unique cycle is the trivial one
	$\Omega(1)=(1\rightarrow  2\rightarrow 1)$ of length $2$. 
\end{conjecture}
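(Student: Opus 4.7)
The claim decomposes, by the three-part definition of strong admissibility, into: (i) the set $\{1,2\}$ is a cycle of the Collatz map $C$ defined in \eqref{CollatzMap}; (ii) $C$ has no other cycle on $\mathbb{N}$; and (iii) every trajectory of $C$ is bounded, hence (since $\mathbb{N}$ is discrete and there is only one cycle) eventually enters $\Omega(1)$. Since the statement is the classical Collatz conjecture reformulated in the triplet notation, the honest expectation is that only (i) is settled by a direct computation, while (ii) and (iii) constitute the open problem.

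For (i), I would simply compute $C(1)=(3\cdot 1+1)/2=2$ and $C(2)=2/2=1$, yielding admissibility and the explicit trivial cycle of length $2$, and in particular $\#\mathrm{Cycl}(2,3,1)_{\pmb{+}}\ge 1$. For (ii), the plan would be to follow the Diophantine line initiated by Eliahou \cite{Eliahou1993}: tracing \eqref{CollatzMap} around a hypothetical nontrivial cycle of length $L$ containing $k$ odd integers produces a relation linking $2^{L}$, $3^{k}$ and the odd elements of the cycle, from which continued-fraction approximations of $\log_{2}3$ yield strong lower bounds on $k$, on $L$, and on the minimum element $\omega$ of any nontrivial cycle. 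One would then combine these lower bounds with the experimental verifications of \cite{Oliviera2010,Barina2025} (up to roughly $2^{71}$) in order to exclude every nontrivial cycle with small minimum.

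For (iii), the natural plan is to start from the Terras stopping-time framework \cite{Terras1976,Everett1977}, which gives that the set of integers with finite stopping time has density $1$, and then to upgrade ``density $1$'' to ``all orbits'' via the refined density and transport estimates of Tao \cite{Tao2020}. Formally, one wants to show that the measure ``carried'' by orbits escaping to infinity is zero not just in the asymptotic sense but pointwise, and that no divergent trajectory survives.

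The main obstacle, unsurprisingly, is that the two sub-claims (ii) and (iii) are precisely the two fronts on which the Collatz conjecture has resisted every known technique: the Eliahou-style lower bounds do not come within range of the computational upper bound, and no argument is currently known for upgrading ``almost all orbits attain almost bounded values'' to ``every orbit reaches $1$''. Consequently, no complete proof can be proposed here; the realistic contribution is the clean reduction of the statement to those two open sub-claims, together with the observation that (i), admissibility, is immediate.
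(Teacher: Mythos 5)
The statement is the classical Collatz conjecture restated in the paper's triplet notation, and the paper offers no proof of it; the only provable component --- admissibility via the trivial cycle $(1\rightarrow 2\rightarrow 1)$ --- is exactly what your part (i) establishes by direct computation, and it agrees with the paper's theorem on trivial cycles specialized to $q=0$, $d=2$, $\alpha=3$, $\beta=1$. Your honest reduction of the remaining claims (uniqueness of the cycle and absence of divergent trajectories) to the two open fronts of the Collatz problem matches the paper's own position, so the proposal is correct and takes essentially the same, necessarily incomplete, approach.
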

We may also reformulate our proposed conjecture as 
\begin{conjecture}\label{MyConjecture_2}
	The triplet $(10,12,8)_+$ is  strogly admissible  of order one.  Its unique cycle is the trivial one
	$\Omega(4)=(4\rightarrow  8\rightarrow 16\rightarrow  24\rightarrow 32\rightarrow  40\rightarrow 4)$ of length $6$. 
\end{conjecture}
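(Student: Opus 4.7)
The plan is to decompose the conjecture into three independent claims and attack each separately: (a) verify that $\Omega(4) = (4 \to 8 \to 16 \to 24 \to 32 \to 40 \to 4)$ is a cycle; (b) rule out any other cycle of $T$; and (c) rule out divergent trajectories. Part (a) is a routine calculation: the numbers $4, 8, 16, 24, 32$ are not multiples of $10$, so the second branch of \eqref{mapT01} sends each to the next, and $10 \mid 40$, so the first branch maps $40$ back to $4$. Well-definedness of $T$ on $\mathbb{N}$ follows from \eqref{CdtCollatztriplet} since $\alpha + \beta = 20 \equiv 0 \pmod{10}$ and $\alpha = 12 > 10$.

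For part (c), I would adapt the Terras--Everett stopping-time argument to base $10$. On a non-divisible step, $T(n)/n = (12 + 8\,[n]_{10}/n)/10 \approx 6/5$ for large $n$, whereas on a divisible step the ratio is $1/10$. An orbit having a fraction $\rho$ of divisible steps therefore contracts as soon as $\rho > \log(6/5)/\log(12) \approx 0.073$, and heuristically multiples of $10$ should occur along generic orbits with density close to $1/10$, comfortably above the contraction threshold. To make this precise I would classify initial residues modulo $10^k$ according to the sequence $([T^{(i)}(n)]_{10})_{0 \le i < k}$ of last digits produced (noting that this sequence is indeed determined by $n \bmod 10^{k+1}$), count the residue patterns along which the scaling factor $(6/5)^{k_0}(1/10)^{k_1}$ is strictly below $1$, and conclude as in the classical Terras argument that a density-one set of starting values has finite stopping time.

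For part (b), I would follow Eliahou's technique. Unrolling a hypothetical cycle of length $k = k_0 + k_1$ with $k_0$ multiply steps and $k_1$ divide steps yields a Diophantine identity of the form $(10^k - 12^{k_0})\,\omega = 8 \sum_{i=0}^{k_0 - 1} c_i\, r_i$, where the $r_i \in \{1, \ldots, 9\}$ are the last digits encountered along the cycle and each $c_i$ is a monomial in $10$ and $12$ determined by the positions of the divisions. A Baker-type lower bound for the linear form $|k \log 10 - k_0 \log 12|$ would then force any non-trivial cycle to have length at least some enormous explicit value, while extensive computer verification up to a bound $N$ rules out all cycles with smaller element directly.

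The main obstacle is precisely what blocks the classical Collatz conjecture: passing from density-one or ``almost bounded'' convergence to pointwise convergence for every initial $n$. Tao's recent advances, while striking, still fall short of pointwise convergence, and their Fourier-analytic machinery would require a nontrivial adaptation to the base-$10$, last-digit-dependent setting considered here. One should therefore regard Conjecture~\ref{MyConjecture_2} as at least as hard as the classical Collatz conjecture, and expect only partial density-type statements and extended computational verification to be within reach of current techniques.
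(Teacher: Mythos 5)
This statement is a \emph{conjecture}, and the paper offers no proof of it: the only support given is the computational verification in Section~\ref{sec:Verification}, where the triplet $(10,12,8)_+$ is checked for all starting values $n \le 6.5\times 10^{9}$ using the sufficient criterion that some iterate drops below $n$. Your part (a) is correct and is the only portion of your plan that constitutes an actual proof; it matches the paper's (implicit) verification that $\Omega(4)$ is a cycle, and your appeal to condition \eqref{CdtCollatztriplet} for well-definedness is the same argument the paper uses for the general map \eqref{mapT}.

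The genuine gap is in parts (b) and (c), and you have correctly diagnosed it yourself in your final paragraph: neither step can currently be completed. The Terras--Everett density argument you sketch for (c) yields at best that a density-one set of integers has finite stopping time; it cannot be upgraded to the pointwise statement ``for every $n$ there exists $k$ with $T^{(k)}(n)=4$'' that the conjecture requires, and even Tao's refinement only gives ``almost all orbits attain almost bounded values.'' Likewise, the Eliahou/Baker approach in (b) produces lower bounds on the length of a hypothetical nontrivial cycle, which combined with finite computation excludes cycles whose minimal element is below the verified bound --- but it does not exclude all nontrivial cycles, and it says nothing about divergent trajectories. So your proposal is an honest and well-informed research program, not a proof; since the paper itself proves nothing here beyond exhibiting the trivial cycle and reporting numerical evidence, there is no ``paper proof'' for your attempt to fall short of --- but as a proof of the stated claim, the attempt is necessarily incomplete.
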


The remainder of this paper is organized as follows. In Section~\ref{sec:MainConjecture}, we formulate the main general conjecture, which encompasses the classical Collatz conjecture together with our proposed new conjecture.  
In Section~\ref{Sec:Toatal Stopping Time}, we give some formuleas  concerning the total stopping time for the   general mapping \ref{mapT}. The   Section~\ref{sec:Verification} is devoted to the verification of the conjectures together with an
algorithm for the backward operator that allows us the construction of graphs illustrating the conjecture.

\section{ The $(2^p+2^{q+1})n+2^p$
	problem  modulo $2^p+2^q$ where $p\geq q\geq0$ as an extension of the Collatz conjecture}\label{sec:MainConjecture}
\subsection{Case $d=2^qd'\geq 2$ with $d'\geq 2$ and $q\geq 0$}

Let $q\geq 0$ be a positive integer and for the general map case \eqref{mapT}, we choose   $d=2^qd'\geq 2$ with $d'\geq 2$ and $ \alpha = d+2^q$,  $\beta = d-2^q$ and $\kappa_0=1$.  Then , the corresponding mapping \eqref{mapT}  is given by
\begin{equation}\label{map_d=2^qd'}
	T(n)=\left\{\begin{array}{lcc}
		n/d&\mathrm{if}& \modd{n}{0}{d},\\
		\dfrac{	(d+2^q) n+(d-2^q) [n]_d}{d}&\mathrm{if}& \notmodd{n}{0}{d},\\
	\end{array}\right.
\end{equation}
which may be simplified for $q\geq 1$ as
\begin{equation}\label{map_case4_0BisA^q}
	T(n)=\left\{\begin{array}{lcc}
		n/d&\mathrm{if}& \modd{n}{0}{d}\\
		\dfrac{	(d'+1) n+(d'-1) [n]_d}{d'}&\mathrm{if}& \notmodd{n}{0}{d},\\
	\end{array}\right.
\end{equation}
We have the following theorem.
\begin{theorem}\label{trivialcycleTheoremD+2^q}
	Let $q\geq 0$, for $d=2^qd'\geq 2$, then  the triplet  $(d,\alpha,\beta)_{\pmb{+}}:=(d,d+2^q,d-2^q)_{\pmb{+}}$ is  admissible and it  has at least  the following
	trivial cycle of length $\dfrac{1}{2^q}d+q$:
	\begin{equation}\label{Trivialcycle(d,d+2^q,d-2^q)}
		\Omega(\frac{1}{2^q}\beta)=\bigl(\frac{1}{2^q}\beta\rightarrow
		\frac{1}{2^{q-1}}\beta\rightarrow\ldots\rightarrow\frac{1}{2^2}\beta\rightarrow\frac{1}{2}\beta\rightarrow\beta\rightarrow 2\beta\rightarrow 3\beta\rightarrow  \ldots\rightarrow d'\beta\rightarrow \frac{1}{2^q}\beta\bigr),
	\end{equation}
	for $q\geq 1$. For  $q=0$ the trivial cycle  is  reduced to	the following cycle of length $d$:
	\begin{equation}\label{Trivialcycle(d,d+1,d-1)}
	\Omega(\beta)=\bigl(\beta\rightarrow 2\beta\rightarrow 3\beta\rightarrow  \ldots\rightarrow d\beta\rightarrow \beta\bigr).
	\end{equation}

\end{theorem}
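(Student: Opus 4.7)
My plan is to prove Theorem~\ref{trivialcycleTheoremD+2^q} by direct verification: I will check the two admissibility requirements (well-definedness condition \eqref{CdtCollatztriplet} and existence of at least one cycle) and then walk through the proposed cycle, showing that $T$ sends each listed element to the next one.

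\textbf{Step 1: Well-definedness.} With $\alpha=d+2^q$, $\beta=d-2^q$, $\kappa_0=+1$, I compute $\alpha+\kappa_0\beta=2d\equiv 0\pmod d$ and $2d>0$, which gives \eqref{CdtCollatztriplet}. Since $d'\geq 2$, we also have $0<2^q<d$, so $\alpha\equiv 2^q\not\equiv 0$ and $\beta\equiv -2^q\not\equiv 0\pmod d$, completing the setup needed for $T:\mathbb{N}\to\mathbb{N}$ to be a valid operator.

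\textbf{Step 2: The ``multiplicative'' arrows $k\beta\to (k+1)\beta$ for $1\leq k\leq d'-1$.} I rewrite $\beta=2^q(d'-1)$, so $k\beta=2^qk(d'-1)$. Since $\gcd(d'-1,d')=1$ and $k<d'$, $k\beta$ is not divisible by $d=2^qd'$, so the non-trivial branch applies. A short calculation gives
\[
k\beta=(k-1)d+2^q(d'-k),\qquad 1\leq k\leq d'-1,
\]
hence $[k\beta]_d=2^q(d'-k)=d-k\cdot 2^q$. Substituting into \eqref{map_d=2^qd'} and factoring $(d-2^q)$ reduces the numerator to $d(k+1)(d-2^q)$, so $T(k\beta)=(k+1)\beta$, as wanted.

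\textbf{Step 3: The ``wrap-around'' arrow $d'\beta\to \beta/2^q$.} From $k=d'$ the same computation gives $[d'\beta]_d=0$, so the dividing branch applies and $T(d'\beta)=d'\beta/d=d'-1=\beta/2^q$. For $q=0$ this already closes the cycle $\Omega(\beta)$ of length $d$, verifying \eqref{Trivialcycle(d,d+1,d-1)}.

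\textbf{Step 4: The ``halving'' arrows $\beta/2^j\to \beta/2^{j-1}$ for $1\leq j\leq q$ (case $q\geq 1$).} Here $\beta/2^j=2^{q-j}(d'-1)$, which is strictly less than $d/2<d$, hence nonzero mod~$d$ and equal to its own remainder $[\beta/2^j]_d$. Plugging into \eqref{map_d=2^qd'} yields
\[
T(\beta/2^j)=\frac{\bigl((d+2^q)+(d-2^q)\bigr)\,\beta/2^j}{d}=\frac{2d\cdot \beta/2^j}{d}=\beta/2^{j-1}.
\]
Chaining Steps~2--4 produces exactly the cycle \eqref{Trivialcycle(d,d+2^q,d-2^q)}; distinctness of its $(q+1)+(d'-1)=q+d'$ elements is immediate because the first block $\{2^i(d'-1):0\leq i\leq q\}$ lies in $[1,\beta]$ while the second block $\{k\beta:2\leq k\leq d'\}$ lies in $(\beta,d'\beta]$. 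This simultaneously establishes existence of a cycle (hence admissibility) and confirms the announced length.

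I do not expect a genuine obstacle: every step is a single algebraic identity. The only point that needs a bit of care is the bookkeeping on $[k\beta]_d$ and the distinctness count; I will present those as explicit one-line calculations rather than hiding them in ``it is easy to see''.
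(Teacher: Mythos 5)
Your proposal is correct and follows essentially the same route as the paper's proof: direct verification that $T$ sends each listed element to the next, using $[k\beta]_d=d-2^qk$ for the multiplicative arrows and $\alpha+\beta=2d$ for the halving arrows. The only difference is that you additionally check the well-definedness condition \eqref{CdtCollatztriplet} and the pairwise distinctness of the cycle elements, two small points the paper's proof leaves implicit.
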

\begin{proof} For $q=0$, we have obviously $T^{(k)}(\beta)=(k+1)\beta$ for $1\leq k\leq d-1$, then $T^{(d)}(\beta)=\beta$. Now, suppose $q\geq 1$. As $d=2^qd'$, then $\alpha =2^q(d'+1)$ and $\beta =2^q(d'-1)$. For $1\leq k\leq q$, we 
	have  obviously, \\
	$T(\dfrac{1}{2^k}\beta)= \dfrac{1}{2^k}\beta\;\dfrac{\alpha+\beta}{d} $. As $\alpha+\beta=2d$, we  get 	$T(\dfrac{1}{2^k}\beta)= \dfrac{1}{2^{k-1}}\beta$. Then 	$T^{(q)}(\dfrac{1}{2^q}\beta)=\beta$. And, then 
	$T(k\beta)=\dfrac{(d+2^q)k\beta+\beta[k\beta]_d}{d}$ for $1\leq k\leq d'-1$. But $k\beta=k(d-2^q)$ and $2^q\leq 2^qk\leq 2^q(d'-1)=d-2^q<d$, then $[k\beta]_d=[-2^qk]_d =d-2^qk$. It follows that 
	$T(k\beta)=\dfrac{(d+2^q)k\beta+\beta(d-2^qk)}{d}=(k+1)\beta$. So,
	$T^{(d'-1)}(\beta)=d'\beta$. But $d'\beta=\frac{1}{2^q}\beta d$. It follows that  	$T^{(d'+q)}(\frac{1}{2^q}\beta)=\frac{1}{2^q}\beta$, which gives rise to the trivial cycle \eqref{Trivialcycle(d,d+2^q,d-2^q)}
	of length $d'+q=\dfrac{1}{2^q}d+q$.
\end{proof}

\begin{example}
	For instance, for $d=12$ and  $q=1$, we get  the admissible triplet $(12,14,10)_+$ which has the trivial cycle of length $d/2^q+q=7$ obtained by \eqref{Trivialcycle(d,d+2^q,d-2^q)}  as
	$$\Omega(5)=(5\rightarrow 10\rightarrow 20\rightarrow 30\rightarrow40\rightarrow 50\rightarrow 60\rightarrow 5).$$ 
	We may verify that the triplet $(12,14,10)_+$ has also two other cycles which are 
	$$\Omega(4)=4\rightarrow 8\rightarrow 16\rightarrow 22\rightarrow4),$$  of length $6$ and 	$\Omega(1305)=(1305\rightarrow 1530\rightarrow 1790\rightarrow 2090\rightarrow2440\rightarrow 2850\rightarrow 3330\rightarrow3890\rightarrow4540\rightarrow
	5300\rightarrow6190\rightarrow7230\rightarrow8440\rightarrow9850\rightarrow11500\rightarrow13420\rightarrow15660\rightarrow
	\rightarrow 1305)$,  of length $17$. 
	It seems that the admissible triplet$(12,14,10)_+$ has only these three cycles.
\end{example}

\subsection{Particular case where $d=2^p+2^q$ with $p\geq q\geq 0$}
Now, we introduce  the more interesting case where  $d=2^p+2^q$  with    $p\geq q$  is a positive integer. Then, 
\begin{equation}\label{parametersd_dp+dq}
	d=d_{p,q}:=2^p+2^q,\quad \alpha=\alpha_{p,q}:=d_{p,q}+2^q=2^p+2^{q+1}, \quad \beta=\beta_{p,q}:=d_{p,q}-2^q=2^p,
\end{equation}
and the 
corresponding mapping $T:=T_{p,q}$, for $p\geq q\geq 0$, is given  by
\begin{equation}\label{mapMainConjecture_dpdq}
	T_{p,q}(n)=\left\{\begin{array}{lcc}
		n/(2^p+2^q)&\mathrm{if}& \modd{n}{0}{(2^p+2^q)}\\
		\dfrac{(2^p+2^{q+1})n+2^p[ n]_{p,q}}{2^p+2^q}&\mathrm{if}& \notmodd{n}{0}{(2^p+2^q)},\\
	\end{array}\right.
\end{equation}
here $[\,\,]_{p,q}=[\,\,]_{d_{p,q}}$ stands for the remainder in the Euclidean division by $d_{p,q}=2^p+2^q$.   We observe
that the case  $p =q=0$  gives the triplet $(2,3,1)_+$  of   the classical Collatz case  \eqref{CollatzMap}   and the case $p=3$ and $q=1$ gives the triplet $(10,12,8)_+$ which is exactly the triplet of our conjecture \ref{Myconjecture}.   For  $p\geq q\geq 1$,  we may simplify the map $T_{p,q}$   to
\begin{equation}\label{mapMainConjecture_dpdq__Simplified}
	T_{p,q}(n)=\left\{\begin{array}{lcc}
		n/(2^p+2^q)&\mathrm{if}& \modd{n}{0}{(2^p+2^q)}\\
		\dfrac{(2^{p-q}+2)n+2^{p-q}[ n]_{p,q}}{2^{p-q}+1}&\mathrm{if}& \notmodd{n}{0}{(2^p+2^q)}.\\
	\end{array}\right.
\end{equation}

We now state the following main conjecture, which arises from extensive computational experiments and computer-assisted verification. This will be discussed breviely in Section~\ref{sec:Verification}. We firmly believe that the following conjecture is true. It extends the classical Collatz conjecture and encompasses our proposed Conjecture~\ref{MyConjecture_2}.

\begin{conjecture}\label{MainConjecture2p2q}
	For all   integers $p$ and $q$ with $p\geq q\geq 0$, the triplet
	$(2^p+2^q, 2^p+2^{q+1}, 2^p)_+$	  
	is  strongly  admissible.\\
	Furthermore, let  $\mathbb{E}=\{(1,0),(2,1),(2,2),(3,0),(4,0),(5,2),(6,2),(7,0)\}\subset \mathbb{N}_0\times  \mathbb{N}_0$.  
	\begin{enumerate} 
		\item If $(p,q)\not\in \mathbb{E}$, then $(2^p+2^q, 2^p+2^{q+1}, 2^p)_+$	 is
		of order one, its unique  trivial cycle is $\Omega(2^{p-q})$ of length $2^{p-q}+q+1$ given as:
		\begin{equation}\label{TrivialCycle2p2q}
			\Omega(2^{p-q})=\Bigl(2^{p-q}\rightarrow  2^{p-q+1}\rightarrow
			\cdots  \rightarrow2^{p-1}\rightarrow 
			2^{p}\rightarrow
			2\cdot2^{p}\rightarrow
			3\cdot2^{p}\rightarrow
			\ldots\rightarrow (2^{p-q}+1)\cdot2^{p}\rightarrow 2^{p-q}\Bigr).
		\end{equation}
		So, $\forall n\geq 1$,  $\exists k\geq 0$ such that $T_{p,q}^{(k)}(n)=2^{p-q}$.
		\item  If $(p,q)\in \mathbb{E}$, then:
		\begin{enumerate}
			\item  If $(p,q)\not=(5,2)$,  then 
			$(2^p+2^q, 2^p+2^{q+1}, 2^p)_+$   is  of order two.     Its first  cycle is  given by 	\eqref{TrivialCycle2p2q} and its second  cycle is  $\Omega(\omega_{p,q})$ the one starting  by $\omega_{p,q}$ given bellow.\\ 
			So,  $\forall n\geq 1$, $\exists k\geq 0$ such that $T^{(k)}_{p,q}(n)\in \{2^{p-q},\omega_{p,q}\}$. 
			\vspace{0.5cm}
			\item  If $(p,q)=(5,2)$,  then $(36,40,30)_+$ is of order three.  Its first  cycle is  given by 	\eqref{TrivialCycle2p2q}  and is $\Omega(8)=\bigl(8\rightarrow16\rightarrow32\rightarrow64\rightarrow96\rightarrow128\rightarrow160\rightarrow192\rightarrow224\rightarrow256\rightarrow288\rightarrow8\bigr)$. Its second and third cycles are
			$\Omega(\omega^{(1)}_{5,2})$ and $\Omega(\omega^{(2)}_{5,2})$ starting  by $\omega^{(1)}_{5,2}=76200$  and  $\omega^{(1)}_{5,2}=87176$  of length $70$ and  $35$, receptively.\\
			So, $\forall n\geq 1$,  $\exists k\geq 0$ such that 
			$T_{5,2}^{(k)}(n)\in\{8,76200,87176\}$.
		\end{enumerate}
	\end{enumerate}	
The cycles  $\Omega(\omega_{1,0})$, $\Omega(\omega_{2,1})$, $\Omega(\omega_{2,2})$, $\Omega(\omega_{3,0})$, $\Omega(\omega_{4,0})$, $\Omega(\omega^{(1)}_{5,2})$, $\Omega(\omega^{(2)}_{5,2})$, $\Omega(\omega_{6,2})$ and $\Omega(\omega_{7,0})$
starting by $\omega_{1,0}=14$, $\omega_{2,1}=74$,  $\omega_{2,2}=67$,  $\omega_{3,0}=280$, $\omega_{4,0}=1264$,
$\omega^{(1)}_{5,2}=76200$, $\omega^{(2)}_{5,2}=87176$, $\omega_{6,2}=1264$ and $\omega_{7,0}=3027584$, respectively and of length
$9$, $7$, $6$, $21$,  $49$, $70$, $35$, $69$, $630$  respectively are given as following:
$$\begin{array}{ll}
	\Omega(\omega_{1,0})=&
	\bigl(14\rightarrow 20\rightarrow  28\rightarrow 38\rightarrow 52\rightarrow 70\rightarrow  94 \rightarrow 126\rightarrow 42\rightarrow 14\bigr),\\
	\Omega(\omega_{2,1})=&	\bigl(74\rightarrow 100\rightarrow  136\rightarrow 184\rightarrow 248\rightarrow 332\rightarrow  444 \rightarrow 74 \bigr),\\
	\Omega(\omega_{2,2})=&
	\bigl(67\rightarrow102\rightarrow156\rightarrow236\rightarrow356\rightarrow536\rightarrow67\bigr),\\
	\Omega(\omega_{3,0})=&
	\bigl(280\rightarrow 312\rightarrow 352\rightarrow 392\rightarrow 440\rightarrow 496\rightarrow 552\rightarrow 616\rightarrow 688\rightarrow
	768\rightarrow856\rightarrow952\rightarrow\\
	& 1064\rightarrow 1184\rightarrow 1320\rightarrow 1472\rightarrow 1640\rightarrow
	1824\rightarrow 2032\rightarrow 2264 \rightarrow 2520 \rightarrow280),\\
	\Omega(\omega_{4,0})=&\bigl(
	1264\rightarrow 1344\rightarrow 1424\rightarrow 1520\rightarrow 1616\rightarrow 1712\rightarrow 1824\rightarrow 
	1936\rightarrow 2064\rightarrow\\
	&2192\rightarrow 2336\rightarrow  2480\rightarrow 2640\rightarrow 2800\rightarrow
	2976 \rightarrow 3152\rightarrow3344\rightarrow 3552\rightarrow 3776\\
	&\rightarrow 4000\rightarrow4240\rightarrow 
	4496\rightarrow
	4768\rightarrow 5056\rightarrow 5360\rightarrow 5680\rightarrow 6016\rightarrow 6384\\
	&\rightarrow6768\rightarrow 7168 \rightarrow 7600\rightarrow 8048\rightarrow 8528\rightarrow
	9040\rightarrow 9584\rightarrow 
	10160\rightarrow 10768	\\
	& \rightarrow 11408\rightarrow 12080\rightarrow 12800\rightarrow 13568\rightarrow 
	14368\rightarrow 15216\rightarrow 16112\rightarrow 17072\rightarrow	\\
	&  18080\rightarrow 19152\rightarrow
	20288\rightarrow 21488\rightarrow 1264\bigr),\\
	\Omega(\omega^{(1)}_{5,2})=&
	\bigl(76200\rightarrow84688\rightarrow94112\rightarrow104576\rightarrow116224\rightarrow129152\rightarrow143520\rightarrow159488\\
	&\rightarrow177216\rightarrow196928\rightarrow218816\rightarrow243136\rightarrow270176\rightarrow300224\rightarrow333600\rightarrow\\&370688\rightarrow411904\rightarrow457696\rightarrow508576\rightarrow565088\rightarrow627904\rightarrow697696\rightarrow775232\\&\rightarrow861376\rightarrow957088\rightarrow1063456\rightarrow1181632\rightarrow1312928\rightarrow1458816\rightarrow1620928\\&\rightarrow1801056\rightarrow2001184\rightarrow2223552\rightarrow2470624\rightarrow2745152\rightarrow3050176\rightarrow3389088\\&\rightarrow3765664\rightarrow4184096\rightarrow4649024\rightarrow5165600\rightarrow5739584\rightarrow6377344\rightarrow7085952\\&\rightarrow196832\rightarrow218720\rightarrow243040\rightarrow270048\rightarrow300064\rightarrow333408\rightarrow370464\rightarrow\\&411648\rightarrow457408\rightarrow508256\rightarrow564736\rightarrow627488\rightarrow697216\rightarrow774688\rightarrow860768\\&\rightarrow956416\rightarrow1062688\rightarrow1180768\rightarrow1311968\rightarrow1457760\rightarrow1619744\rightarrow1799744\\&\rightarrow1999744\rightarrow2221952\rightarrow2468864\rightarrow2743200\rightarrow76200\bigr),\\
\end{array}$$	
$$\begin{array}{ll}
	\Omega(\omega^{(2)}_{5,2})=&
	\bigl(87176\rightarrow96880\rightarrow107648\rightarrow119616\rightarrow132928\rightarrow147712\rightarrow164128\rightarrow182368\\& \rightarrow202656\rightarrow225184\rightarrow250208\rightarrow278016\rightarrow308928\rightarrow343264\rightarrow381408\\&\rightarrow423808\rightarrow470912\rightarrow523264\rightarrow581408\rightarrow646016\rightarrow717824\rightarrow797600\\&\rightarrow886240\rightarrow984736\rightarrow1094176\rightarrow1215776\rightarrow1350880\rightarrow1500992\rightarrow1667776\\&\rightarrow1853088\rightarrow2059008\rightarrow2287808\rightarrow2542016\rightarrow2824480\rightarrow3138336\rightarrow87176\bigr),\\
	\Omega(\omega_{6,2})=&\bigl(1264\rightarrow1376\rightarrow1472\rightarrow1600\rightarrow1728\rightarrow1856\rightarrow1984\rightarrow2112\rightarrow2240\rightarrow2432\\
	&\rightarrow2624\rightarrow2816\rightarrow3008\rightarrow3200\rightarrow3392\rightarrow3648\rightarrow3904\rightarrow4160\rightarrow4416\rightarrow4736\\&\rightarrow5056\rightarrow5376\rightarrow5696\rightarrow6080\rightarrow6464\rightarrow6848\rightarrow7296\rightarrow7744\rightarrow8256\rightarrow8768\\&\rightarrow9344\rightarrow9920\rightarrow10560\rightarrow11200\rightarrow11904\rightarrow12608\rightarrow13376\rightarrow14208\rightarrow15104\\&\rightarrow16000\rightarrow16960\rightarrow17984\rightarrow19072\rightarrow20224\rightarrow21440\rightarrow22720\rightarrow24064\rightarrow25536\\&\rightarrow27072\rightarrow28672\rightarrow30400\rightarrow32192\rightarrow34112\rightarrow36160\rightarrow38336\rightarrow40640\\&\rightarrow43072\rightarrow45632\rightarrow48320\rightarrow51200\rightarrow54272\rightarrow57472\rightarrow60864\rightarrow64448\rightarrow\\&68288\rightarrow72320\rightarrow76608\rightarrow81152\rightarrow85952\rightarrow1264\bigr),\\
	\Omega(\omega_{7,0})=&\bigl(
	3027584\rightarrow3051136\rightarrow3074816\rightarrow3098752\rightarrow 3122816\rightarrow3147136\rightarrow3171584\rightarrow\\&3196288\rightarrow3221120\rightarrow3246208\rightarrow3271424\rightarrow3296896\rightarrow3322496\rightarrow3348352\rightarrow\ldots\ldots\\&
	\ldots\rightarrow
	367160576\rightarrow370006784\rightarrow372875136\rightarrow375765760\rightarrow378678784\rightarrow381614336\rightarrow\\& 384572672\rightarrow387553920\rightarrow390558336\rightarrow3027584\bigr).
\end{array}$$		
\end{conjecture}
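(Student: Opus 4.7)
My plan separates the conjecture into two logically independent pieces: (a) exhibiting the listed cycles, and (b) showing that every orbit eventually reaches one of them. Part (a) is essentially mechanical. The cycle $\Omega(2^{p-q})$ in \eqref{TrivialCycle2p2q} is delivered directly by Theorem~\ref{trivialcycleTheoremD+2^q} applied with $d=2^p+2^q$, $d'=2^{p-q}+1$ and $\beta=2^p$, since then $\beta/2^q=2^{p-q}$ and $d'\beta=(2^{p-q}+1)\cdot 2^p$, with length $d'+q=2^{p-q}+q+1$. For each $(p,q)\in\mathbb{E}$ the additional cycles $\Omega(\omega_{p,q})$, $\Omega(\omega^{(1)}_{5,2})$ and $\Omega(\omega^{(2)}_{5,2})$ can be verified to close under $T_{p,q}$ by finite computation.

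For (b), I would follow the Terras--Everett program. Set $\tau(n)=\min\{k\geq 1:T_{p,q}^{(k)}(n)<n\}$; if $\tau(n)<\infty$ for every $n$ larger than the maximum element of the listed cycles, a descending induction on $n$ finishes the proof. Expanding the first $k$ iterates in terms of the residue pattern $r_j=[T_{p,q}^{(j)}(n)]_d$, if $s$ of those $k$ steps are divisions (and $k-s$ are multiplicative), one obtains the affine recursion
\begin{equation*}
T_{p,q}^{(k)}(n) \;=\; \frac{\alpha^{k-s}}{d^{k}}\,n \;+\; R_{k}(r_0,\ldots,r_{k-1}),
\end{equation*}
where $R_k$ is a bounded function of the residues with coefficients in $\mathbb{Z}[1/d]$. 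Since $\alpha/d=1+2^q/(2^p+2^q)\le 3/2$, the multiplicative step inflates by at most $3/2$ while each division contracts by $d$; hence $T_{p,q}^{(k)}(n)<n$ as soon as $s/k>\log(\alpha/d)/\log\alpha$ and $n$ is large enough for $R_k$ to be negligible.

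To rule out further cycles I would use Eliahou-style Diophantine analysis: a cycle of length $k$ with $s$ divisions and smallest element $n_0$ satisfies
\begin{equation*}
\bigl(d^{k}-\alpha^{k-s}\bigr)\,n_0 \;=\; d^{k}\,R_{k}(r_0,\ldots,r_{k-1}),
\end{equation*}
so positive-integer solubility, combined with continued-fraction lower bounds on $|k\log d-(k-s)\log\alpha|$, forces $k$ to lie either in an explicit finite range or to exceed an enormous threshold. Exhaustive enumeration of residue patterns in the finite range, together with the computational verification of Section~\ref{sec:Verification}, would then confirm the exact cycle list stated in the conjecture.

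The decisive obstacle is unsurprising: proving that the density threshold $s/k>\log(\alpha/d)/\log\alpha$ is actually attained along every trajectory is a strict generalization of the Collatz conjecture itself (recovered at $p=q=0$) and lies beyond current technology. Realistically, the strongest unconditional results I could hope to reach are the natural analogues of Terras's theorem (that the set of $n$ with $\tau(n)<\infty$ has natural density one, via the ergodic theorem applied to the shift on $d$-adic residue sequences) and of Tao's almost-bounded-orbit statement, obtained by adapting \cite{Tao2020} from $2$-adic to $(2^p+2^q)$-adic arithmetic.
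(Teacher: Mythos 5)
You should first be aware that the paper itself contains no proof of this statement: it is explicitly a \emph{conjecture}, supported only by the computational experiments of Section~\ref{sec:Verification} (Algorithm~\ref{Algo_T}, run for $p\le 25$ and $n\le 10^7$, plus the extended check for $(p,q)=(3,1)$). The only rigorously established fragment is the existence of the trivial cycle \eqref{TrivialCycle2p2q}, which is exactly Theorem~\ref{trivialcycleTheoremD+2^q} specialized to $d=2^p+2^q$, $d'=2^{p-q}+1$, $\beta=2^p$; your part (a) reproduces this correctly, and the closure of the exceptional cycles for $(p,q)\in\mathbb{E}$ is indeed a finite computation. So on the portion the paper actually proves, you and the paper agree.

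Your part (b), however, is a research program rather than a proof, and you say so yourself. The gap is not merely "hard": since $(p,q)=(0,0)$ gives the triplet $(2,3,1)_+$, any proof of strong admissibility of order one would contain the classical Collatz conjecture, so the decisive step (that the division density $s/k$ exceeds $\log(\alpha/d)/\log\alpha$ along \emph{every} trajectory) cannot be closed by the Terras--Everett machinery, which yields only density-one statements. Two further points deserve care if you pursue the program. First, even the Terras-type analogue is not automatic here: for composite $d=2^p+2^q$ the one-step residue map $[n]_d\mapsto[T_{p,q}(n)]_d$ need not equidistribute divisions the way parity does for $d=2$, so the ergodic/counting argument on $d$-adic residue sequences must be checked, not just invoked. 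Second, your Eliahou-style cycle equation cannot be expected to rule out \emph{all} nontrivial cycles, because the conjecture itself asserts that extra cycles exist precisely for $(p,q)\in\mathbb{E}$ (with lengths up to $630$); any Diophantine bound must therefore leave room for these, and the claim that $\mathbb{E}$ is exactly the exceptional set is itself only an empirical observation in the paper. In short: your proposal is an honest and reasonable sketch of what partial results might be provable, but neither you nor the paper proves the statement, and no proof should be expected at the present state of knowledge.
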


\begin{example}
	\begin{enumerate}
		\item  The classical Collatz conjecture is a special case of our conjecture~\ref{MainConjecture2p2q},  corresponding to $p=q=0$ with the triplet $(2,3,1)_{\pmb{+}}$.  If the conjecture is true, then for all
		$ n\in\mathbb{N}$, $\exists k\in \mathbb{N}_0$ such that  $T_{0,0}^{(k)}(n)=2^{0-0}=1$ and its unique cycle is the trivial cycle $\Omega(1)=(1\rightarrow 2\rightarrow 1)$ of length $2^{p-q}+q+1=2$. We recover the classic Collatz conjecture. 
		
		\item The case  $p=3$ and $q=1$  leads to the triplet $(10,12,8)_{\pmb{+}}$ corresponding to  our first conjecture~\ref{MyConjecture_2}.  If the previous conjecture is true, then  thr triplet $(10,12,8)_{\pmb{+}}$ is  strongly  admissible   of order one and  its nique trivial cycle is 
		$\Omega(2^{3-1})=(4\rightarrow 8\rightarrow 16\rightarrow 24\rightarrow32\rightarrow40\rightarrow4)$ of length $2^{p-q}+q+1=6$. For all $n\geq 1$, there exists an integer $k\geq 0$ such that $T^{(k)}(n)=2^{3-1}=4$, where the mapping $T$ is given by \eqref{mapT0}.
		
		\item 	For $p=2$ and $q=0$, we get   the triplet $(5,6,4)_+$. The  corresponding 
		mapping  is $T_{2,0}:\mathbb{N}\longrightarrow \mathbb{N} $ given  by
		\begin{equation}\label{mapT_Exp2}
			T_{2,0}(n)=\left\{\begin{array}{ccc}
				n/5&\mathrm{if}& \modd{n}{0}{5}\\ 
				\Bigl(6 n+4 [ n]_d\Bigr)/5&\mathrm{if}& \notmodd{n}{0}{5}.\\
			\end{array}\right.
		\end{equation}
		If the previous conjecture is true, then the triplet
		$(5,6,4)_+$  is strongly admissible  of order one. Its unique  trivial cycle is
		$\Omega(4)=(4\rightarrow 8\rightarrow 12\rightarrow 16\rightarrow 20\rightarrow 4)$,  of length $2^{p-q}+q+1==5$.  For all
		$ n\in\mathbb{N}$, $\exists k\geq 0$ such that  $T_{2,0}^{(k)}(n)=2^{2-0}=4$.  
		
		Let us examine some trajectories for this triplet. Starting from the integer $n=95$ the trajectory is 
		$$95\rightarrow 19\rightarrow 26\rightarrow 32\rightarrow 40\rightarrow 8\rightarrow 12\rightarrow 16\rightarrow 20\rightarrow 4.$$
		But, starting from the integer $n=83$ the trajectory is:
		$$	\begin{array}{l}
			83\rightarrow 102\rightarrow 124\rightarrow 152\rightarrow 184\rightarrow 224\rightarrow 272\rightarrow 328\rightarrow 396\rightarrow 476\rightarrow 572\rightarrow 688\rightarrow\\ 828\rightarrow 996\rightarrow 1196\rightarrow 1436\rightarrow 1724\rightarrow 2072\rightarrow 2488\rightarrow 2988\rightarrow 3588\rightarrow 4308\rightarrow 5172\rightarrow\\ 6208\rightarrow 7452\rightarrow 8944\rightarrow 10736\rightarrow 12884\rightarrow 15464\rightarrow
			18560\rightarrow 3712\rightarrow 4456\rightarrow 5348\rightarrow\\ 6420\rightarrow 1284\rightarrow 1544\rightarrow 1856\rightarrow 2228\rightarrow 2676\rightarrow 3212\rightarrow 3856\rightarrow 4628\rightarrow 5556\rightarrow\\ 6668\rightarrow 8004\rightarrow 9608\rightarrow 11532\rightarrow 13840\rightarrow 2768\rightarrow 3324\rightarrow 3992\rightarrow 4792\rightarrow 5752\rightarrow\\ 6904\rightarrow 8288\rightarrow 9948\rightarrow 11940\rightarrow 2388\rightarrow 2868\rightarrow 3444\rightarrow 4136\rightarrow 4964\rightarrow 5960\rightarrow 1192\rightarrow\\ 1432\rightarrow 1720\rightarrow 344\rightarrow 416\rightarrow 500\rightarrow 100\rightarrow 20\rightarrow 4.
		\end{array}
		$$
		Figure~\ref{fig_Traj183} presents the   trajectories  starting from $95$ and from $83$, respectively. 
		
		\begin{figure}[h]%
			\centering
			\begin{tabular}{cc}
				\includegraphics[width=5cm,height=5cm]{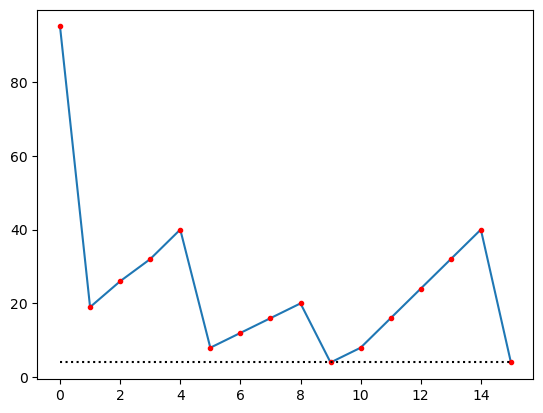}&
				\includegraphics[width=5cm,height=5cm]{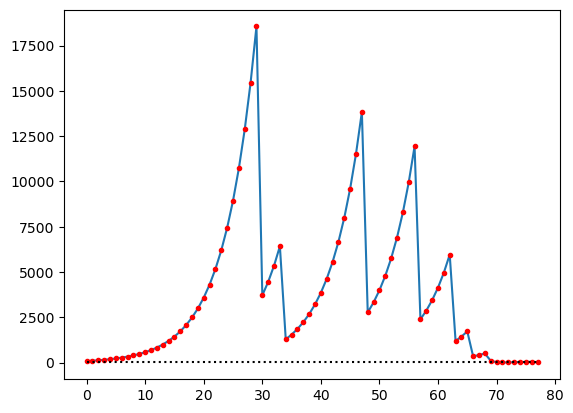}
			\end{tabular}
			\caption{Graphs  of the trajectories stating from $n=95$ (left) and from $n=83$ (right).}\label{fig_Traj183}
		\end{figure}
		
		\item 	For $p=25$~and $q=0$, we get   the triplet
		$\bigl(33445533,33445534,33445532\bigr)_+$. The  corresponding 
		mapping  $T_{25,0}:\mathbb{N}\longrightarrow \mathbb{N} $ is given  by
		\begin{equation}\label{mapT_Exp3}
			T_{25,0}(n)=\left\{\begin{array}{ccc}
				n/33554433&\mathrm{if}& \modd{n}{0}{33554433}\\ 
				\Bigl(33554434 n+33554432 [ n]_d\Bigr)/33554433&\mathrm{if}& \notmodd{n}{0}{33554433}.\\
			\end{array}\right.
		\end{equation}
		If the previous conjecture is true,  then  $\bigl(33445533,33445534,33445532\bigr)_+$ is an admissible strong triplet. Its
		unique cycle is the following trivial cycle of length $33554433$: \\
		$
		\begin{array}{ll}
			\Omega(33554432)=
			(33554432 \rightarrow 67108864\rightarrow 100663296\rightarrow 134217728 \rightarrow 167772160
			\rightarrow &\\
			201326592 \rightarrow 
			234881024\rightarrow 
			268435456\rightarrow 
			301989888\rightarrow 
			335544320\rightarrow 
			369098752\rightarrow &\\
			402653184\rightarrow 
			\ldots\ldots\ldots\ldots\ldots\ldots\ldots\rightarrow 
			1125899571298304\rightarrow 
			1125899604852736\rightarrow &\\
			1125899638407168\rightarrow 
			1125899671961600\rightarrow 
			1125899705516032\rightarrow 
			1125899739070464&\\\rightarrow 
			1125899772624896\rightarrow 
			1125899806179328\rightarrow
			1125899839733760\rightarrow&\\
			1125899873288192\rightarrow
			1125899906842624\rightarrow 1125899940397056 \rightarrow 33554432).&
		\end{array}
		$
		and for all $n\in \mathbb{N}$ there exists $k\geq 0$ such that $T_{25,0}^{(k)}(n)= 2^{25}=33554432$. We note that the maximum element of the trivial cycle $\Omega(33554432)$ is $1125899940397056$. 
		
	\end{enumerate}
\end{example}

\subsection{Particular case where $p=q\geq 0$}
Another special case derived from Conjecture~\ref{MainConjecture2p2q} is obtained by setting $p=q \geq 0$. Then the corresponding triplet for $p\geq 0$  is
$$(d_p,\alpha_p,\beta_p)_+:=(2^{p+1},3\times 2^p,2^p)_+.$$
Thus the 
corresponding mapping $T:=T_{p}$, for $p\geq 0$, is given  by
\begin{equation}\label{mapMainConjecture_dpSimple}
	T_{p}(n)=\left\{\begin{array}{lcc}
		n/2^{p+1}&\mathrm{if}& \modd{n}{0}{2^{p+1}}\\
		\dfrac{3n+[ n]_{p}}{2}&\mathrm{if}& \notmodd{n}{0}{2^{p+1}},\\
	\end{array}\right.
\end{equation}
here $[\,\,]_{p}=[\,\,]_{d_{p}}$ stands for the remainder in the Euclidean division by $d_{p}=2^{p+1}$.  It is clear that the classical Collatz mapping is obtained for $p=0$.   From Conjecture~\ref{MainConjecture2p2q},  we deduce the following special case conjecture.

\begin{conjecture}\label{MainConjecture2p}
	\begin{itemize}
		\item   For all $p\geq 0$ with $p\not=2$  the triplet  $(2^{p+1},3\times 2^{p},2^{p})_+$, associated to the mapping $T_p$ given by \eqref{mapMainConjecture_dpSimple}, is strongly  admissible  of order one, its unique  trivial cycle of length $p+2$ is:
		$$\Omega(1)=\bigl(1\rightarrow  2\rightarrow 2^2
		\rightarrow\cdots \rightarrow 
		2^{p+1}\rightarrow 1\bigr).$$
		For all integer $n\geq 1$, there exists an integer $k\geq 0$ such that $T^{(k)}_p(n)=1$.\\
		\item For $p=2$, the corresponding strongly admissible triplet $(8,12,4)_+$ is of order  $2$, its first trivial cycle is $\Omega(1)=\bigl(1\rightarrow2\rightarrow4\rightarrow8\rightarrow1\bigr)$  and its second trivial cycle is 
		$\Omega(67)=
		\bigl(67\rightarrow102\rightarrow156\rightarrow236\rightarrow356\rightarrow536\rightarrow67\bigr)$. For all $n\geq 1$, there exists an integer $k\geq 0$ such that $T_2^{(k)}(n)\in\{1,67\}$.
	\end{itemize}
\end{conjecture}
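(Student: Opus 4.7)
My plan is to split Conjecture \ref{MainConjecture2p} into two parts: (i) the existence and explicit form of the stated cycle(s), which reduces to a direct calculation already encoded in Theorem \ref{trivialcycleTheoremD+2^q}, and (ii) the claim of \emph{strong} admissibility (no other cycles, no divergent orbits), which is the truly deep content.

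For part (i), I would specialize Theorem \ref{trivialcycleTheoremD+2^q} to $q=p$, so that $d=2^qd'=2^{p+1}$ with $d'=2$, $\alpha=d+2^q=3\cdot 2^p$ and $\beta=d-2^q=2^p$. Formula \eqref{Trivialcycle(d,d+2^q,d-2^q)} then yields a cycle of length $d/2^q+q=p+2$, namely
$$\Omega(1)=\bigl(1\to 2\to 2^2\to\cdots\to 2^p\to 2^{p+1}\to 1\bigr),$$
which is exactly the stated trivial cycle for every $p\geq 0$. For the exceptional case $p=2$, the second cycle is precisely $\Omega(\omega_{2,2})=(67\to 102\to 156\to 236\to 356\to 536\to 67)$ already tabulated in Conjecture \ref{MainConjecture2p2q}, and it is certified by six direct applications of $T_2$ via \eqref{mapMainConjecture_dpSimple}.

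For part (ii), observe that $T_0$ coincides with the accelerated Collatz map $C$ of \eqref{CollatzMap}: for $n$ even, $T_0(n)=n/2=C(n)$, and for $n$ odd, $[n]_0=[n]_2=1$ and so $T_0(n)=(3n+1)/2=C(n)$. Consequently the $p=0$ instance of Conjecture \ref{MainConjecture2p} is \emph{exactly} the classical Collatz conjecture, and any complete proof must subsume Collatz. A reasonable intermediate goal would be to transfer the density and stopping-time techniques of Terras \cite{Terras1976} and the almost-bounded orbit theorem of Tao \cite{Tao2020} to the accelerated map $T_p$, by partitioning orbits according to the residue class of $n$ modulo $2^{p+1}$ and comparing the non-trivial branch $(3n+[n]_p)/2$ with a pure Collatz step.

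The principal obstacle is twofold. First, ruling out \emph{all} additional cycles and \emph{all} divergent trajectories — not merely almost all — is precisely where current methods fail for Collatz itself, and no sharper tool is available in the more general setting. Second, the dependence of $T_p$ on the residue $[n]_p$ genuinely alters the invariant-set structure: already at $p=2$ a new cycle $\Omega(67)$ appears that is absent at $p=0$, so any reduction to Collatz must explain why such additional cycles occur for $p\in\{2\}$ and only there. I would therefore expect the realistic output of this plan to be a conditional or density-type statement in the spirit of \cite{Tao2020}, rather than a full resolution of Conjecture \ref{MainConjecture2p}.
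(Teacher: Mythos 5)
The statement you are addressing is a \emph{conjecture}: the paper offers no proof of it, only a derivation as the $p=q$ specialization of Conjecture~\ref{MainConjecture2p2q} together with computational evidence (Section~\ref{sec:Verification}). Your decomposition is therefore the right way to read it, and everything you assert is correct: with $q=p$, $d'=2$ one gets $d=2^{p+1}$, $\alpha=3\cdot 2^p$, $\beta=2^p$, and Theorem~\ref{trivialcycleTheoremD+2^q} (formula \eqref{Trivialcycle(d,d+2^q,d-2^q)} for $p\geq 1$, \eqref{Trivialcycle(d,d+1,d-1)} for $p=0$) delivers exactly the cycle $\Omega(1)=(1\to 2\to\cdots\to 2^{p+1}\to 1)$ of length $d/2^q+q=p+2$; your verification of $\Omega(67)$ for $p=2$ and your observation that $T_0$ is the accelerated Collatz map $C$ of \eqref{CollatzMap} are both accurate. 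Where your route differs from the paper's is that you ground the \emph{admissibility} part (existence of the trivial cycle) in the proved Theorem~\ref{trivialcycleTheoremD+2^q} rather than merely in the unproved Conjecture~\ref{MainConjecture2p2q}, which is a genuine (if modest) gain: it turns the cycle-existence clause into a theorem and isolates the genuinely open content — order one (respectively two for $p=2$) and the absence of divergent trajectories. Your assessment that this residual content subsumes the classical Collatz conjecture at $p=0$, and that at best density-type results in the spirit of Terras and Tao could be transferred, matches the paper's own position; no proof should be expected here, and none is claimed by the author.
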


\begin{example}
	For $p=1$, we get the triplet $(d_p,\alpha_p,\beta_p)_{+} =(4,6,2)_{+}$, which gives the map $T_1$ as following:
	\begin{equation}\label{CollazConjectureV2}
		T_1(n)=\left\{\begin{array}{lcc}
			n/4&\mathrm{if}& \modd{n}{0}{4}\\
			\dfrac{3n+[ n]_{4}}{2}&\mathrm{if}& \notmodd{n}{0}{4},\\
		\end{array}\right.
		\quad\mbox{or}\quad
		T_1(n)=\left\{\begin{array}{lcc}
			n/4&\mathrm{if}& \modd{n}{0}{4}\\
			\dfrac{3n+1}{2}&\mathrm{if}& \modd{n}{1}{4},\\
			\dfrac{3n+2}{2}&\mathrm{if}& \modd{n}{2}{4},\\
			\dfrac{3n+3}{2}&\mathrm{if}& \modd{n}{3}{4},\\
		\end{array}\right.
	\end{equation}
	If the previous conjecture is true, then the triplet $(4,6,2)_+$ is  strongly admissible  of order one. Its  unique trivial cycle is $\Omega(1)=(1\rightarrow 2\rightarrow 4\rightarrow 1)$ and for all $n\geq 1$, there exists an integer $k\geq 0$ such that $T_1^{(k)}(n)=1$.
\end{example}

\section{Some  iterate and total stopping time formulas}\label{Sec:Toatal Stopping Time}
In this section we return back to the general triplet $(d,\alpha,\beta)_{\pmb{\pm}}$ with its associated map 
$T:\mathbb{N}:\longrightarrow \mathbb{N}$ defined  in   \eqref{mapT}.
For the case of the classical Collatz problem, Terras \cite{Terras1976}, defined the  notion of  total stopping  time which is an interesting tool to understand the behaviour of the iterations for the Collatz operator.  For  a general  admissible   triplet $(d,\alpha,\beta)_{\pmb{\pm}}$,  the  total stopping time of a positive integer $n$ may be also defined  by a similar property as for the classical Collatz case. So, assume that  $(d,\alpha,\beta)_{\pmb{\pm}}$ is an   admissible  triplet of order $c_0=\#Cycl(d,\alpha,\beta)_{\pmb{\pm}}>0$,  it has at least $c_0$ cycles: $\Omega(\omega_1),\ldots, \Omega(\omega_{c_0})$.  

Let $n$ be a positive integer. Thus, the total stopping time of $n$,  denoted by $\sigma_{\infty}(n)$,  is the smallest  integer $k$ such that  
$T^{(k)}(n)\in\{ \omega_1,\ldots, \omega_{c_0}\}$. If no such integer exists  the total stopping time is set  to be equal to $\infty$, i.e $\sigma_{\infty}(n)=\infty$. 

In this section, we will   state  basic formulas on forward iteration of the   associated mapping $T$.  Some relations  
concerning  the total stopping time
will also be  given.  Let us fist introduce the following functions, for all integers $n\geq 1$ and $k\geq 1$:
$$
s(n)=\left\{\begin{array}{lll}
	0&\mathrm{if}& \modd{n}{0}{d}\\
	1&\mathrm{if}& \notmodd{n}{0}{d},\\
\end{array}\right.
\quad\mathrm{and}\quad
s_k(n)=\sum_{i=0}^{k-1}s(T^{(i)}(n)).$$
The function $s_k(n)$ is the number of steps that  $\notmodd{T^{(i)}(n)}{0}{d}$  for $i=0,\ldots, k$. We set $s_0(n)=0$.

\begin{theorem}
	Let $d\geq 2$ and  let $(d,\alpha,\beta)_{\pmb{\pm}}$ be an admissible  triplet. For all integers $a\geq 1$ and $n\geq 1$, we have 
	\begin{equation}\label{T(adk+n)-casgenral0}
		T(ad^k+n)= \alpha^{s(n)}ad^{k-1}+T(n),\;\forall  k\geq 1,
	\end{equation}
	and 
	\begin{equation}\label{Tk(adk+n)-casgenral1}
		T^{(k)}(ad^k+n)= \alpha^{s_k(n)}a+T^{(k)}(n),\;\forall  k\geq 0.
	\end{equation}
	Thus, $\forall  k\geq 0$, we have
	\begin{equation}\label{sigmainf(adk+n)-casgenral}
		\sigma_\infty(ad^k+n)=\sigma_\infty\Bigl(\alpha^{s_k(n)}a+T^{(k)}(n)\Bigr)+k.
	\end{equation}
\end{theorem}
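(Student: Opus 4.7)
The plan is to establish the three identities sequentially: the first by a direct case split on the residue of $n$ modulo $d$, the second by induction on $k$ using the first, and the third by applying $\sigma_\infty$ to the second. The guiding observation throughout is that for $k \geq 1$ the summand $ad^k$ is a multiple of $d$, so both the divisibility branch of $T$ at $ad^k + n$ and the remainder $[\kappa_0(ad^k+n)]_d$ are determined by $n$ alone, not by $a$ or $k$.

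For \eqref{T(adk+n)-casgenral0}, I would split on whether $d$ divides $n$. If $d \mid n$, then $s(n) = 0$ and $d \mid ad^k + n$, giving $T(ad^k+n) = (ad^k+n)/d = ad^{k-1} + T(n) = \alpha^{s(n)} ad^{k-1} + T(n)$. If $d \nmid n$, then $s(n) = 1$ and $d \nmid ad^k + n$; using $[\kappa_0(ad^k+n)]_d = [\kappa_0 n]_d$ and linearity in the first argument, the second branch of \eqref{mapT} yields $T(ad^k+n) = (\alpha(ad^k+n) + \beta [\kappa_0 n]_d)/d = \alpha\, ad^{k-1} + T(n)$, which is again the claimed formula.

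For \eqref{Tk(adk+n)-casgenral1}, I would induct on $k$. The case $k = 0$ is immediate from $s_0(n) = 0$. For the inductive step, apply \eqref{T(adk+n)-casgenral0} once to reduce from $T^{(k)}$ to $T^{(k-1)}$ acting on $\alpha^{s(n)} a d^{k-1} + T(n)$, then use the inductive hypothesis with the fresh parameters $a' := \alpha^{s(n)} a$ and $n' := T(n)$. This gives
$$T^{(k)}(ad^k+n) = \alpha^{s_{k-1}(T(n))} \cdot \alpha^{s(n)} a + T^{(k-1)}(T(n)).$$
The bookkeeping identity $s(n) + s_{k-1}(T(n)) = s_k(n)$, which is a direct telescoping of the defining sum of $s_k$, collapses the product of $\alpha$-powers and gives \eqref{Tk(adk+n)-casgenral1}.

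Finally, \eqref{sigmainf(adk+n)-casgenral} follows by feeding \eqref{Tk(adk+n)-casgenral1} into the definition of the total stopping time: setting $N := \alpha^{s_k(n)} a + T^{(k)}(n)$, the orbit of $ad^k+n$ from step $k$ onward coincides with that of $N$, so $\sigma_\infty(ad^k+n) = k + \sigma_\infty(N)$. The one subtlety, and the main obstacle to flag, is ruling out that some intermediate iterate $T^{(j)}(ad^k+n) = \alpha^{s_j(n)} a d^{k-j} + T^{(j)}(n)$ with $0 \leq j < k$ already coincides with one of the cycle representatives $\omega_1, \dots, \omega_{c_0}$. Since each such intermediate iterate is bounded below by $d^{k-j}$, this is automatic as soon as $d^{k} > \max_i \omega_i$; for smaller $k$ the identity \eqref{sigmainf(adk+n)-casgenral} is consistent with, and implicitly asserts, the inequality $\sigma_\infty(ad^k+n) \geq k$.
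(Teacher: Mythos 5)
Your proofs of \eqref{T(adk+n)-casgenral0} and \eqref{Tk(adk+n)-casgenral1} are correct and essentially the same as the paper's: the identical case split on whether $d\mid n$, and the same induction on $k$. (The only cosmetic difference is that the paper peels off the \emph{last} application of $T$, writing $ad^{k+1}=(ad)d^{k}$, applying the inductive hypothesis with $a'=ad$ and finishing with the one-step formula, whereas you peel off the \emph{first}; your telescoping identity $s_k(n)=s(n)+s_{k-1}(T(n))$ is the mirror image of the paper's $s_{k+1}(n)=s_k(n)+s(T^{(k)}(n))$.)

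The substantive point is \eqref{sigmainf(adk+n)-casgenral}, which the paper dismisses as following ``immediately,'' and you are right to flag that it does not: the identity $\sigma_\infty(ad^k+n)=\sigma_\infty\bigl(T^{(k)}(ad^k+n)\bigr)+k$ presupposes that no iterate $T^{(j)}(ad^k+n)$ with $0\le j<k$ already equals some $\omega_i$, and this can genuinely fail. Concretely, for the triplet $(3,4,2)_+$ (the case $(p,q)=(1,0)$, whose cycle minima are $2$ and $14$), take $a=1$, $k=1$, $n=11$: then $ad^k+n=14=\omega_2$, so $\sigma_\infty(14)=0$, while the right-hand side is $\sigma_\infty(4+T(11))+1=\sigma_\infty(20)+1=9$. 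So the theorem as stated needs an extra hypothesis, and the paper's one-line justification is a real gap. Your proposed repair, however, is not quite right: the $j$-th intermediate iterate $\alpha^{s_j(n)}ad^{k-j}+T^{(j)}(n)$ is only bounded below by $d^{k-j}+1$, which for $j=k-1$ is merely $d+1$, so the condition $d^k>\max_i\omega_i$ does not exclude a collision at the late steps. A condition that does work uniformly is $d>\max_i\omega_i$ (as in the classical Collatz case, where $\omega_1=1<2$); otherwise one must assume directly that $\alpha^{s_j(n)}ad^{k-j}+T^{(j)}(n)\neq\omega_i$ for all $j<k$ and all $i$, which is exactly the inequality $\sigma_\infty(ad^k+n)\ge k$ that you note the formula implicitly asserts.
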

\begin{proof}
	The first relation \eqref{T(adk+n)-casgenral0} is obtained directly by disjunction of cases. If  $\modd{n}{0}{d}$, then
	$n = d\,T(n)$, it follows that  $ad^k+n=d\bigl(ad^{k-1}+T(n)\bigr)$ and  $T(ad^k+n)= ad^{k-1}+T(n)$. Else, we have   
	$$T(ad^k+n)=\dfrac{\alpha (ad^k+n)+\beta [\kappa_0 n]_d)}{d}=
	a\alpha d^{k-1}+ \dfrac{\alpha n +\beta [\kappa_0 n]_d}{d}= \alpha  a d^{k-1}+T(n).$$
	For the relation \eqref{Tk(adk+n)-casgenral1}, we proceed by induction on $k\geq 0$. For $k=0$, the relation is trivially true. For $k=1$,
	the relation is true by \eqref{T(adk+n)-casgenral0}.
	We assume that, for all $a\geq 1$  and for all $n\geq 1$,
	the formula \eqref{Tk(adk+n)-casgenral1} is true until an order $k$.   As
	$$T^{(k+1)}(ad^{k+1}+n)=T\Bigl[T^{(k)}((ad)d^{k}+n)\Bigr].$$
	By the induction hypothesis, we have $T^{(k)}((ad)d^{k}+n)=\alpha^{s_k(n)}ad+T^{(k)}(n)$. As the relation is true for $k=1$, then we have
	$$T^{(k+1)}(ad^{k+1}+n)=\alpha^{s(T^{(k)}(n))}\alpha^{s_k(n)}a+T\Bigl[T^{(k)}(n)\Bigr]=\alpha^{s(T^{(k)}(n))+s_k(n)}a+T^{(k+1)}(n)=\alpha^{s_{k+1}(n)}a+T^{(k+1)}(n),$$
	which show that the formula \eqref{Tk(adk+n)-casgenral1} is true at the order $k+1$.
	The relation 
	\eqref{sigmainf(adk+n)-casgenral} follows immediately from
	\eqref{Tk(adk+n)-casgenral1}.\hfill\end{proof}

Let us remark that the main interest of such theorem is that the relations given by the theorem  may be used to accelerate verification algorithms of the conjecture by using sieves on the numbers we have to check, see for instance \cite{Barina2021} for the verification of the classical Collatz conjecture.

As the triplet $(d,\alpha,\beta)_{\pmb{\pm}}$  satisfies the condition \eqref{CdtCollatztriplet}, then there exist a positive integer $\lambda_0>0$ such that $\notmodd{\lambda_0}{0}{d}$
and 
$\alpha +\kappa_0\beta =\lambda_0d^{\nu_0}$, where $\nu_0>0$ is the greatest power such that $d^{\nu_0}$ divides  $\alpha +\kappa_0\beta$. In the rest of this section, we will restrict our attention to the particular cases where 
the admissible  triplet $(d,\alpha,\beta)_{\pmb{\pm}}$  is such that $\lambda_0=1$, then
$\alpha+\kappa_0\beta =d^{\nu_0}$.

\begin{theorem}
	Let $d\geq 2$ and consider     an admissible triplet  $(d,\alpha,\beta)_{\pmb{\pm}}$  corresponding to   the parameter   $\beta =\kappa_0(d^{\nu_0}-\alpha)$ with $\alpha>d$.  For all integers $a\geq 1$, $k\geq 0$ and for all integer $r\in\{1,\ldots,d-1\}$ we have 
	\begin{equation}\label{T(adk+kappa0 r)-casgenral_4.2}
		T^{(k)}(ad^k+\kappa_0 r)=\left\{\begin{array}{lll}
			a\alpha^{q_0}+\kappa_0 r&\mathrm{if}& \modd{k}{0}{\nu_0},\\
			a\alpha^{q_0+1}+\kappa_0 r d^{\nu_0-r_0}&\mathrm{if}& \notmodd{k}{0}{\nu_0},\\
		\end{array}\right.
	\end{equation}
	where $ k=q_0\nu_0+r_0$ with $0\leq r_0< \nu_0$  and $ad^k+\kappa_0 r\geq 1$.  Thus, we have
	\begin{equation}\label{sigmainf(adk+kappa0 r)-casgenral}
		\sigma_\infty(ad^k+\kappa_0 r)=\left\{\begin{array}{lll}
			\sigma_\infty(a\alpha^{q_0}+\kappa_0 r)+k&\mathrm{if}& \modd{k}{0}{\nu_0},\\
			\sigma_\infty(a\alpha^{q_0+1}+\kappa_0 r d^{\nu_0-r_0})+k&\mathrm{if}& \notmodd{k}{0}{\nu_0}.\\
		\end{array}\right.
	\end{equation}
\end{theorem}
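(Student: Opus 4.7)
Proof proposal. The plan is to iterate $T$ in blocks of $\nu_0$ steps and induct on the number of complete blocks in $k=q_0\nu_0+r_0$, exploiting the defining identity $\alpha+\kappa_0\beta=d^{\nu_0}$ that distinguishes this subfamily of admissible triplets.

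Set $m=ad^k+\kappa_0 r$. Since $m\equiv\kappa_0 r\not\equiv 0\pmod d$ and $[\kappa_0 m]_d=[\kappa_0^2 r]_d=r$, for $k\geq 1$ the non-trivial branch of $T$ applies and a direct evaluation gives
\[
T(m)=\frac{\alpha m+\beta r}{d}=\alpha a d^{k-1}+\frac{\kappa_0 r(\alpha+\kappa_0\beta)}{d}=\alpha a d^{k-1}+\kappa_0 r d^{\nu_0-1}.
\]
So long as the two exponents $k-j$ and $\nu_0-j$ both remain $\geq 1$, the candidate iterate $T^{(j)}(m)=\alpha a d^{k-j}+\kappa_0 r d^{\nu_0-j}$ is divisible by $d$, so the next application of $T$ merely divides by $d$; a short sub-induction on $j$ then propagates this explicit formula throughout the range $1\leq j\leq\min(k,\nu_0)$.

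With $k=q_0\nu_0+r_0$ and $0\leq r_0<\nu_0$, the case $k\geq\nu_0$ evaluated at $j=\nu_0$ yields $T^{(\nu_0)}(m)=\alpha a\,d^{k-\nu_0}+\kappa_0 r$, which has exactly the same shape as $m$ under the substitutions $a\mapsto\alpha a$, $k\mapsto k-\nu_0$. Iterating this block reduction $q_0$ times (formally an induction on $q_0$) produces $T^{(q_0\nu_0)}(m)=\alpha^{q_0} a\,d^{r_0}+\kappa_0 r$. If $r_0=0$ this already matches the claimed formula; if $r_0\geq 1$, one more pass of the opening calculation, now with $k$ replaced by $r_0<\nu_0$, exhausts the first exponent strictly before the second, producing $\alpha^{q_0+1}a+\kappa_0 r d^{\nu_0-r_0}$ after exactly $r_0$ additional steps. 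The stopping-time identity then follows immediately from the definition of $\sigma_\infty$ applied to this explicit form of $T^{(k)}(m)$, using the composition rule $\sigma_\infty(m)=k+\sigma_\infty(T^{(k)}(m))$ valid whenever the orbit has not yet reached a cycle entry during the first $k$ deterministic steps of the displayed shape.

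The main obstacle is the careful bookkeeping inside the block induction, especially the asymmetric roles of $k$ and $\nu_0$ in the two regimes $r_0=0$ and $r_0\geq 1$, together with verifying that every intermediate quantity $\alpha^{q}a\,d^{k-j}+\kappa_0 r d^{\nu_0-j}$ really lies in $\mathbb{N}$ when $\kappa_0=-1$, since the formula itself is not manifestly positive. This last point is handled indirectly: by admissibility, $T:\mathbb{N}\to\mathbb{N}$ is well-defined, and the hypothesis $ad^k+\kappa_0 r\geq 1$ places the starting value in $\mathbb{N}$, so each algebraically computed iterate coincides with a genuine positive-integer value of $T^{(j)}$ and positivity is forced.
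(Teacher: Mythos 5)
Your argument is correct, and it reaches the stated formulas by a genuinely different induction scheme from the paper's. The paper proves \eqref{T(adk+kappa0 r)-casgenral_4.2} by a single-step induction on $k$ that peels off the \emph{last} application of $T$: it writes $T^{(k+1)}(ad^{k+1}+\kappa_0 r)=T\bigl[T^{(k)}((ad)d^{k}+\kappa_0 r)\bigr]$, invokes the induction hypothesis with $a$ replaced by $ad$, and then treats the three cases $r_0=0$, $1\leq r_0\leq \nu_0-2$ and $r_0=\nu_0-1$ when applying the final step of $T$. You instead peel off the \emph{first} $\nu_0$ applications: your sub-induction gives $T^{(j)}(ad^k+\kappa_0 r)=\alpha a d^{k-j}+\kappa_0 r d^{\nu_0-j}$ for $1\leq j\leq\min(k,\nu_0)$ (one expansion step followed by pure divisions by $d$), which at $j=\nu_0$ reproduces the original shape under $a\mapsto\alpha a$, $k\mapsto k-\nu_0$, so an outer induction on the number $q_0$ of complete blocks plus a tail of $r_0$ further steps finishes the proof. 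Both routes rest on the same key identity $\alpha+\kappa_0\beta=d^{\nu_0}$ and the same one-step computation $T(ad+\kappa_0 r)=a\alpha+\kappa_0 rd^{\nu_0-1}$; your organization makes the dynamical structure of the orbit (one multiplication, then $\nu_0-1$ divisions per block) more transparent, at the cost of a two-level induction, whereas the paper's is a more mechanical one-parameter verification. Two small points where you are more careful than the paper: you check explicitly that $[\kappa_0 m]_d=r$, and you flag both the positivity of the intermediate iterates when $\kappa_0=-1$ and the caveat under which the composition rule $\sigma_\infty(m)=k+\sigma_\infty(T^{(k)}(m))$ is legitimate (the orbit must not meet a cycle representative within the first $k$ steps), an issue the paper passes over in silence.
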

\begin{proof}
	For the relation \eqref{T(adk+kappa0 r)-casgenral_4.2}, we proceed by induction on $k\geq 0$. For $k=0$, the relation is trivially true. For $k=1$, we have 
	\begin{equation}\label{recurencek=1} 
		T(ad+\kappa_0 r)=\dfrac{\alpha(ad+\kappa_0 r)+\beta r}{d}=a \alpha +\kappa_0 r \dfrac{\alpha +\kappa_0\beta }{d}=a \alpha +\kappa_0 r d^{\nu_0-1},
	\end{equation}
	which shows that the formula is also true for $k=1$. We assume that, for all $a\geq 1$  and for all $r\in\{1,\ldots,d-1\}$,
	the formula \eqref{T(adk+kappa0 r)-casgenral_4.2}  is true until an order $k$.   As
	$$T^{(k+1)}(ad^{k+1}+\kappa_0 r)=T\Bigl[T^{(k)}((ad)d^{k}+\kappa_0 r)\Bigr].$$
	So,  by  the recurrence hypothesis at step $k$, it follows that:
	\begin{itemize}
		\item [-] If  $r_0=0$,  then 
		$T^{(k)}((ad)d^{k}+\kappa_0 r)=(ad)\alpha^{q_0}+\kappa_0 r$. According to the relation \eqref{T(adk+kappa0 r)-casgenral_4.2} at step 1, we get
		$$T^{(k+1)}(ad^{k+1}+\kappa_0 r)=T(a \alpha^{q_0} d+\kappa_0 r )=a \alpha^{q_0+1}+\kappa_0 rd^{\nu_0-1}.$$
		\item [-] If  $1\leq r_0\leq \nu_0-2$,  then 
		$T^{(k)}((ad)d^{k}+\kappa_0 r)=(ad)\alpha^{q_0+1}+\kappa_0 rd^{\nu_0-r_0}$. Thus
		$$T^{(k+1)}(ad^{k+1}+\kappa_0 r)=T\bigl((ad)\alpha^{q_0+1}+\kappa_0 rd^{\nu_0-r_0} \bigr)=a\alpha^{q_0+1}+\kappa_0 rd^{\nu_0-r_0-1}.$$
		\item [-] If  $ r_0=\nu_0-1$, then $k+1= (k_0+1)\nu_0$ and
		$$T^{(k)}((ad)d^{k}+\kappa_0 r)=ad\alpha^{q_0+1} +\kappa_0 r d^{\nu_0-(\nu_0-1)}=ad\alpha^{q_0+1} +\kappa_0 r d
		.$$
		It follows that
		$T^{(k+1)}(ad^{k+1}+\kappa_0 r)=T(ad\alpha^{q_0+1} +\kappa_0 r d )=a\alpha^{q_0+1} +\kappa_0 r .$
	\end{itemize}
	Which show that the formula \eqref{T(adk+kappa0 r)-casgenral_4.2}  is true at the order $k+1$. The relation  \eqref{sigmainf(adk+kappa0 r)-casgenral}    is an immediate consequence of the previous formulas  \eqref{T(adk+kappa0 r)-casgenral_4.2}.
	\hfill\end{proof}

We may also establish the following results.

\begin{theorem} 
	Let $d\geq 2$ and consider     an admissible  triplet  $(d,\alpha,\beta)_{\pmb{\pm}}$  corresponding to   $\alpha=d+1$ and  $\beta=-\kappa_0$  with $\kappa_0=\pm 1$. 	Then, 	for all integers $a\geq 1$, for all integer $r\in\{1,\ldots, d-1\} $ and  for all integer $k\geq 0$, we have  the following relations 
	\begin{equation}\label{T(adk+kappa0 r)-(d,d+1,1,kappa0)}
		T^{(k)}(ad^k+\kappa_0 r)=a\alpha^k+\kappa_0r,
	\end{equation} 	
	and for $k>2$, we have
	\begin{equation}\label{T(adk+kappa0 (2d-1))-(d,d+1,1,kappa0)}
		T^{(k)}(ad^k+\kappa_0 (2d-1))=\left\{\begin{array}{lll}
			a3^{k-2} +\kappa_0 &\mathrm{if}& d=2,\\
			a\alpha^{k-1}+2\kappa_0 &\mathrm{if}& d>2,\\
		\end{array}\right.
	\end{equation}
	It follows immediately  that for $k\geq 0$:
	\begin{equation}\label{sigma(adk+kappa0 r)-(d,d+1,1,kappa0)}
		\sigma_\infty(ad^k+\kappa_0 r)=\sigma_\infty(a\alpha^k+\kappa_0 r)+k,
	\end{equation} 	
	and  for $k>2$:
	\begin{equation}\label{sigma(adk+kappa0 (2d-1))-(d,d+1,1,kappa0)}
		\sigma_\infty(ad^k+\kappa_0 (2d-1))=\left\{\begin{array}{lll}
			\sigma_\infty(a3^{k-2}+\kappa_0)+k &\mathrm{if}& d=2,\\
			\sigma_\infty(a\alpha^{k-1}+2\kappa_0)+k  &\mathrm{if}& d>2.\\
		\end{array}\right.
	\end{equation}
\end{theorem}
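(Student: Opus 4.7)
The plan is to establish identity \eqref{T(adk+kappa0 r)-(d,d+1,1,kappa0)} by a short induction on $k\ge 0$, then bootstrap identity \eqref{T(adk+kappa0 (2d-1))-(d,d+1,1,kappa0)} by computing a small initial segment of the trajectory and feeding the result back into the first identity. The two stopping-time statements \eqref{sigma(adk+kappa0 r)-(d,d+1,1,kappa0)} and \eqref{sigma(adk+kappa0 (2d-1))-(d,d+1,1,kappa0)} are immediate corollaries.

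For the first identity the base $k=0$ is trivial. For the inductive step I would start from $n=ad^{k+1}+\kappa_0 r$ with $r\in\{1,\dots,d-1\}$, observe that $[\kappa_0 n]_d=r$ in both sign cases $\kappa_0=\pm 1$ (so $n$ is not a multiple of $d$), and substitute $\alpha=d+1$, $\beta=-\kappa_0$ into the nontrivial branch of \eqref{mapT} to get
\begin{equation*}
T(n)=\frac{(d+1)n-\kappa_0 r}{d}=a\alpha d^{k}+\kappa_0 r.
\end{equation*}
The inductive hypothesis at order $k$, applied with base $a\alpha$ in place of $a$, then gives $T^{(k+1)}(ad^{k+1}+\kappa_0 r)=a\alpha^{k+1}+\kappa_0 r$, which closes the induction.

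For the second identity, set $m=ad^k+\kappa_0(2d-1)$ and note that $2d-1\equiv -1\pmod d$, so $[\kappa_0 m]_d=d-1$. A calculation analogous to the one above gives $T(m)=a\alpha d^{k-1}+2\kappa_0 d$, and since this is a multiple of $d$ (using $k\ge 2$), one more division by $d$ yields $T^{(2)}(m)=a\alpha d^{k-2}+2\kappa_0$. If $d>2$ then $2\in\{1,\dots,d-1\}$ and the first identity applied with base $a\alpha$, residue $r=2$ and exponent $k-2$ finishes, producing $T^{(k)}(m)=a\alpha\cdot\alpha^{k-2}+2\kappa_0=a\alpha^{k-1}+2\kappa_0$. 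The case $d=2$ is the only subtle point, because then $2\notin\{1,\dots,d-1\}$: here I would observe that $T^{(2)}(m)=2(3a\cdot 2^{k-3}+\kappa_0)$ is still even (which forces $k\ge 3$ and explains the hypothesis $k>2$), perform one extra halving to get $T^{(3)}(m)=3a\cdot 2^{k-3}+\kappa_0$, and then invoke the first identity with base $3a$, residue $r=1$ and exponent $k-3$ to obtain $T^{(k)}(m)=3a\cdot 3^{k-3}+\kappa_0=a\cdot 3^{k-2}+\kappa_0$.

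The stopping-time relations follow at once from the iterate formulas in the same spirit as \eqref{sigmainf(adk+kappa0 r)-casgenral}, since $T^{(k)}(n)=n'$ implies $\sigma_\infty(n)=k+\sigma_\infty(n')$ under the convention already used there. The only mildly delicate step in the whole argument is the evaluation of $[\kappa_0 n]_d$ in the case $\kappa_0=-1$ — one rewrites $ad^k-r$ as $(ad^{k-1}-1)d+(d-r)$ in order to read off the remainder; after this, the proof reduces to routine tracking of the telescoping expansions $(d+1)n-\kappa_0 r$ and $(d+1)m-\kappa_0(d-1)$ modulo $d$, and to the single case distinction $d=2$ versus $d>2$ in the second identity.
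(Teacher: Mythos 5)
Your proposal is correct and follows essentially the same route as the paper: an induction on $k$ for \eqref{T(adk+kappa0 r)-(d,d+1,1,kappa0)} driven by the single computation $T(ad+\kappa_0 r)=a\alpha+\kappa_0 r$ (using $\alpha+\kappa_0\beta=d$), and for \eqref{T(adk+kappa0 (2d-1))-(d,d+1,1,kappa0)} an explicit evaluation of the first two (resp.\ three, when $d=2$) iterates via $T(ad^k+\kappa_0(2d-1))=\alpha ad^{k-1}+2\kappa_0 d$ followed by an appeal to the first identity, exactly as in the paper's relation \eqref{R-3}. The only cosmetic difference is that you peel off one application of $T$ at the front of the composition while the paper applies the inductive hypothesis first and then one step of $T$; the content is identical, and your handling of the $d=2$ case and of $[\kappa_0 n]_d$ for $\kappa_0=-1$ matches what the paper implicitly relies on.
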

\begin{proof}
	\begin{itemize}
		\item For the relation  \eqref{T(adk+kappa0 r)-(d,d+1,1,kappa0)}, we again proceed by induction on $k\geq 0$. For $k=0$, the relation is trivially true. For $k=1$, we have 
		$T(ad+\kappa_0 r)=\dfrac{\alpha(ad+\kappa_0 r )+\beta r}{d}=a \alpha +\kappa_0 r  \dfrac{\alpha +\kappa_0\beta }{d}$.
		As  $\alpha +\kappa_0 \beta= d$. Then
		\begin{equation}\label{recurencek=1C} 
			T(ad+\kappa_0  r)=a \alpha +\kappa_0r, 
		\end{equation}	
		which shows that the formula is also true for $k=1$. We assume that, for all $a\geq 1$  and for all $r\in\{1,\ldots,d-1\}$,
		the formula \eqref{T(adk+kappa0 r)-(d,d+1,1,kappa0)} is true until an order $k$.   So,  by  the recurrence hypothesis at step $k$, we get
		$$T^{(k+1)}(ad^{k+1}+\kappa_0 r)=T\Bigl[T^{(k)}((ad)d^{k}+\kappa_0 r )\Bigr]=T\Bigl[(ad)\alpha^{k}+\kappa_0r\Bigr].$$
		According to the relation \eqref{recurencek=1C} at step 1, we obtain
		$T^{(k+1)}(ad^{k+1}+\kappa_0 r)=a\alpha^{k+1}+\kappa_0r$.
		Which show that the formula  \eqref{T(adk+kappa0 r)-(d,d+1,1,kappa0)}  is true at the order $k+1$. The relation \eqref{sigma(adk+kappa0 r)-(d,d+1,1,kappa0)} is a consequence of the relation  \eqref{T(adk+kappa0 r)-(d,d+1,1,kappa0)} .
		
		\item For the relation  \eqref{T(adk+kappa0 (2d-1))-(d,d+1,1,kappa0)}, for $k> 2$ we have
		$$T^{(k)}(ad^k+\kappa_0 (2d-1))=T^{(k-1)}\bigl[T(ad^k+\kappa_0 (2d-1))\bigr].$$
		As  $\bigl[\kappa_0(ad^k+\kappa_0 (2d-1))\bigr]_d=d-1$ and $\alpha-1=d$, it follows 
		$$T(ad^k+\kappa_0 (2d-1))=\dfrac{\alpha(ad^k+\kappa_0 (2d-1))-\kappa_0(d-1)}{d}=\alpha ad^{k-1}+2\kappa_0d.$$
		Then,  we have
		\begin{equation}\label{R-3}
			T^{(k)}(ad^k+\kappa_0 (2d-1))=\left\{\begin{array}{lll}
				T^{(k-3)}\bigl[(3 a)2^{k-3}+\kappa_0\bigr] &\mathrm{if}& d=2,\\
				T^{(k-2)}\bigl[(\alpha a)d^{k-2}+2\kappa_0\bigr] &\mathrm{if}& d>2,\\
			\end{array}\right.
		\end{equation} 
		Then, using the relation    \eqref{T(adk+kappa0 r)-(d,d+1,1,kappa0)}, we obtain the relation \eqref{T(adk+kappa0 (2d-1))-(d,d+1,1,kappa0)}.
		The relations \eqref{sigma(adk+kappa0 r)-(d,d+1,1,kappa0)}
		and  \eqref{sigma(adk+kappa0 (2d-1))-(d,d+1,1,kappa0)}
		are  immediate consequences of the relations 
		\eqref{T(adk+kappa0 r)-(d,d+1,1,kappa0)} and
		\eqref{T(adk+kappa0 (2d-1))-(d,d+1,1,kappa0)}, respectively.
	\end{itemize}
	\hfill\end{proof}

\section{Computation and verification }\label{sec:Verification}

\subsection{Verification of the main conjecture}

\begin{algorithm}[thbp!]
	\caption{Algorithm for testing conjecture~\ref{MainConjecture2p2q}.}
	\label{Algo_T}
	\begin{algorithmic}[1]
		\State{Choose $p_{max}$ and $n_{max}$}
		\For{$p=0,\ldots,p_{max}$  }
		\For{$q=0,\ldots,p$  }
		\State $d = 2^p+2^q$; $\alpha = d+2^q=2^p+2^{q+1}$; $\beta =d-2^q=2^p$;
		\State{$S =\{2^{p-q}\}$}
		\If{$(p,q)==(1,0)$}
		\State{$S =\{2,14\}$}
		\EndIf
		\If{$(p,q)==(2,1)$}
		\State{$S =\{2,74\}$}
		\EndIf
		\If{$(p,q)==(2,2)$}
		\State{$S =\{1,67\}$}
		\EndIf
		\If{$(p,q)==(3,0)$}
		\State{$S =\{8,280\}$}
		\EndIf
		\If{$(p,q)==(4,0)$}
		\State{$S =\{16,1264\}$}
		\EndIf
		\If{$(p,q)==(5,2)$}
		\State{$S =\{8,76200,87176\}$}
		\EndIf
		\If{$(p,q)==(6,2)$}
		\State{$S =\{16,1264\}$}
		\EndIf
		\If{$(p,q)==(7,0)$}
		\State{$S =\{128,3027584\}$}
		\EndIf
		\For{$n=1,\ldots,n_{max}$   }
		\State $m=n$
		\While{$m\not\in S$ }
		\If{$\modd{m}{0}{d}$}
		\State m =m/d
		\Else 
		\State   {$m= (\alpha m+\beta [m]_d )/d$}
		\EndIf
		\EndWhile
		\EndFor
		\EndFor
		\EndFor
	\end{algorithmic}
\end{algorithm} 

In this subsection, we describe the computational verification of the main conjecture~\ref{MainConjecture2p2q}.
The verification procedure is based on the implementation of a naive algorithm analogous to Algorithm~\ref{Algo_T}.
Observe that if Conjecture~\ref{MainConjecture2p2q} were false for some pair of integers $(p,q)$ with
$p \leq p_{\max}$ and $q \leq p$, then the while-loop in Algorithm~\ref{Algo_T} would fail to terminate
for at least one initial value $n$ satisfying
$1 \leq n \leq n_{\max}$, where $p_{\max}$ and $n_{\max}$ denote prescribed computational bounds.  We have implemented and tested Algorithm~\ref{Algo_T} using several programming environments, including
\texttt{Matlab}, \texttt{Python}, \texttt{SageMath}, and \texttt{Mathematica}.
The computations were carried out on three laptop computers, allowing for parallel execution in \texttt{Matlab}.
In particular, using \texttt{SageMath}, it was possible to verify Conjecture~\ref{MainConjecture2p2q}
for all integers $p$ in the range
$0 \leq p \leq p_{\max} = 25$ and for all initial values $n$ with
$1 \leq n \leq n_{\max} = 10^7$.  

For each integer $p \in [0,p_{\max}]$, we computed the maximum total stopping time $\sigma_{\infty}(n)$ over all integers $n \in [1,n_{\max}]$ and all integer $q$ such that $0 \le q \le p$.  This maximum is denoted by
$$
M_{\infty}(p) := \max_{\substack{1 \le n \le n_{\max} \\ 0 \le q \le p}} \sigma_{\infty}(n),
$$
and is reached  for $q=0$. Table~\ref{TableStoppingTime} reports the values observed in our numerical experiments.

\begin{table}\label{TableStoppingTime}
	\centering
	\begin{tabular}{|c|c|c|c|c|c|c|c|}
		\hline
		$p$&$0$&$1$&$2$&$3$&$4$&$5$&$6$ \\             
		\hline
		$M_{\infty}(p)$&$246$&$213$&$268$&$374$&$349$&$731$&$737$ \\
		\hline
		\hline
		 $p$&$7$&$8$&$9$&$10$&$11$&$12$&$13$\\
		 \hline
		 $M_{\infty}(p)$&$818$&$1444$&$3152$&$3638$&$3639$&$4108$&$8205$\\
		 \hline
		 \hline
		 $p$&$14$&$15$&$16$&$17$&$18$&$19$&$20$\\
		 \hline
		 $M_{\infty}(p)$&$16398$&$32783$&$65552$&$131089$&$262162$&$131091$&$1048596$\\
		 \hline
		 \hline
          $p$&$21$&$22$&$23$&$24$&$25$&$---$&$---$\\
		 \hline
		 $M_{\infty}(p)$&$2097173$&$4194326$&$8388631$&$16777240$&$33554457$&$---$&$---$\\
		 \hline
		 \end{tabular}
		 \caption{ Maximum of  the Total Stopping Time $\sigma_{\infty}(n)$.}
	 \end{table}

For the specific case $(p,q) = (3,1)$, corresponding to Conjecture~\ref{MyConjecture_2} with the triplet
$(10,12,8)_+$, we pursued a more extensive verification by adopting an alternative strategy.
Rather than checking whether each trajectory reaches the terminal value $4$, we verified that for every
initial integer $n$ there exists an iteration index $k \in \mathbb{N}$ such that
$T_{3,1}^{(k)}(n) < n$.
This criterion is sufficient to guarantee convergence and is computationally more efficient than
explicitly detecting the eventual cycle.
Using this approach, we extended the computations up to $n \leq  6.5\times 10^{9}$.
The average processing rate was approximately $8.5 \times 10^4$ integers per second, and the total
wall-clock time was about $7.59 \times 10^4$ seconds, corresponding to roughly $21$ hours of computation.
These results allow us to conclude that Conjecture~\ref{MyConjecture_2} holds for all integers
$n$ up to $6.5\times 10^{9}$.

Although the above computational results provide substantial evidence in support of
Conjecture~\ref{MainConjecture2p2q}, the present verification should not be regarded as exhaustive.
Indeed, the numerical experiments are limited by the prescribed computational bounds on the parameters
$p$ and $n$, as well as by the available computational resources and the specific algorithmic strategies adopted.

In particular, while the conjecture has been validated for all integers $p \leq 25$ and for large
ranges of initial values $n$, these results do not preclude the existence of counterexamples beyond
the tested domains.
Extending the verification to larger values of $p$ and increasing the upper bound on $n$ would require
further algorithmic refinements, greater computational power, and possibly new theoretical insights
to reduce the computational complexity.

Consequently, the present study should be regarded as a preliminary step in the computational
verification of Conjecture~\ref{MainConjecture2p2q}.
For the case $p = 3$ and $q = 1$, corresponding to the conjecture modulo $10$, we have verified the
conjecture for all integers $n$ satisfying $1 \leq n \leq 6.5\times 10^{6}$, and it holds true within this range.
Naturally, this upper bound should be extended to much larger values, as was done in~\cite{Barina2025}
for the classical Collatz conjecture.
Ongoing work is focused on improving algorithmic efficiency and exploring advanced parallel and
distributed computing strategies to significantly extend the range of validated parameters.
These developments, together with additional theoretical considerations, will be the subject of
forthcoming work.

\subsection{Backward mapping and graphs}
\label{sec:graphs}
%

\begin{figure}[h]%
	\centering
	\begin{tabular}{cc}
		\includegraphics[width=8cm,height=8cm]{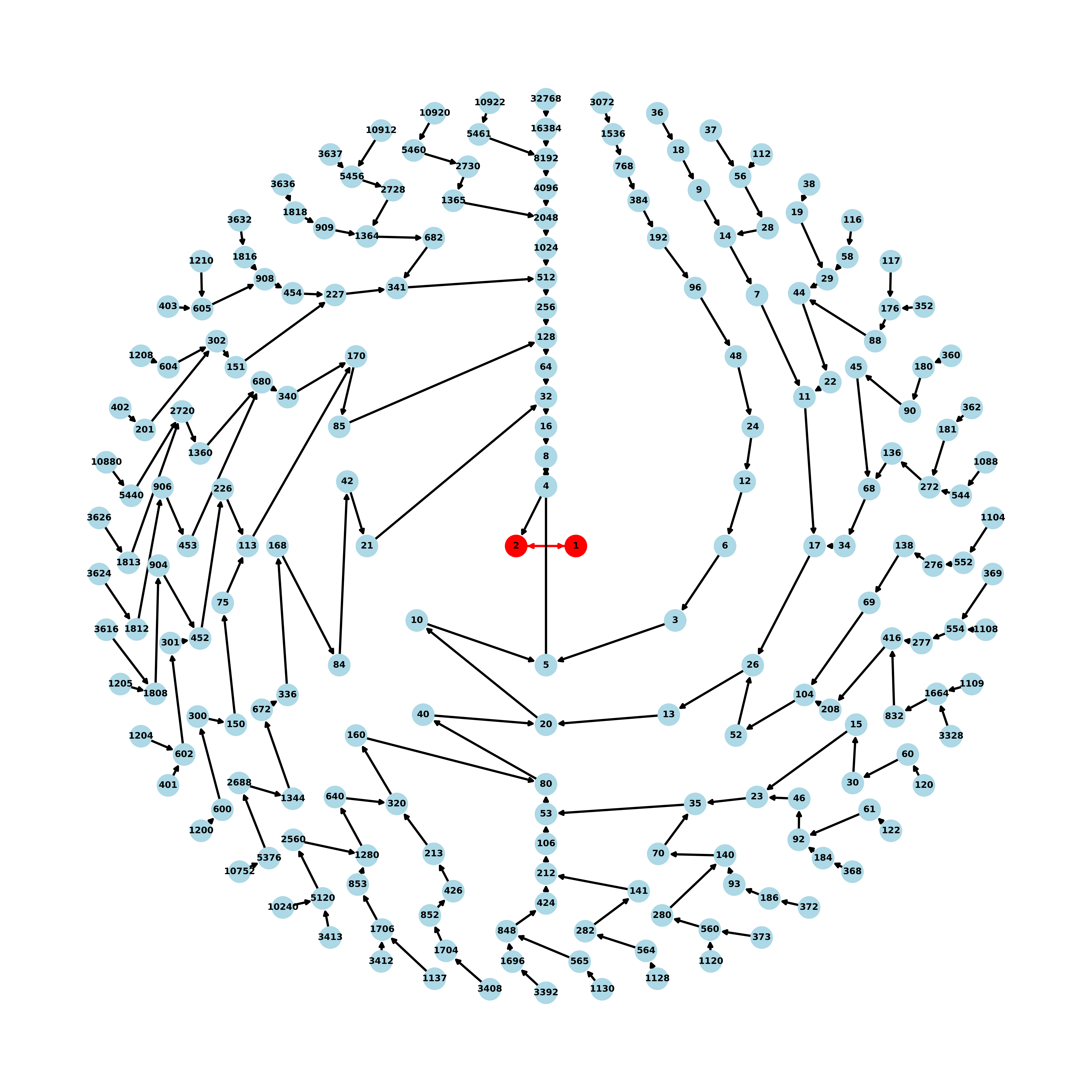}&
		\includegraphics[width=8cm,height=8cm]{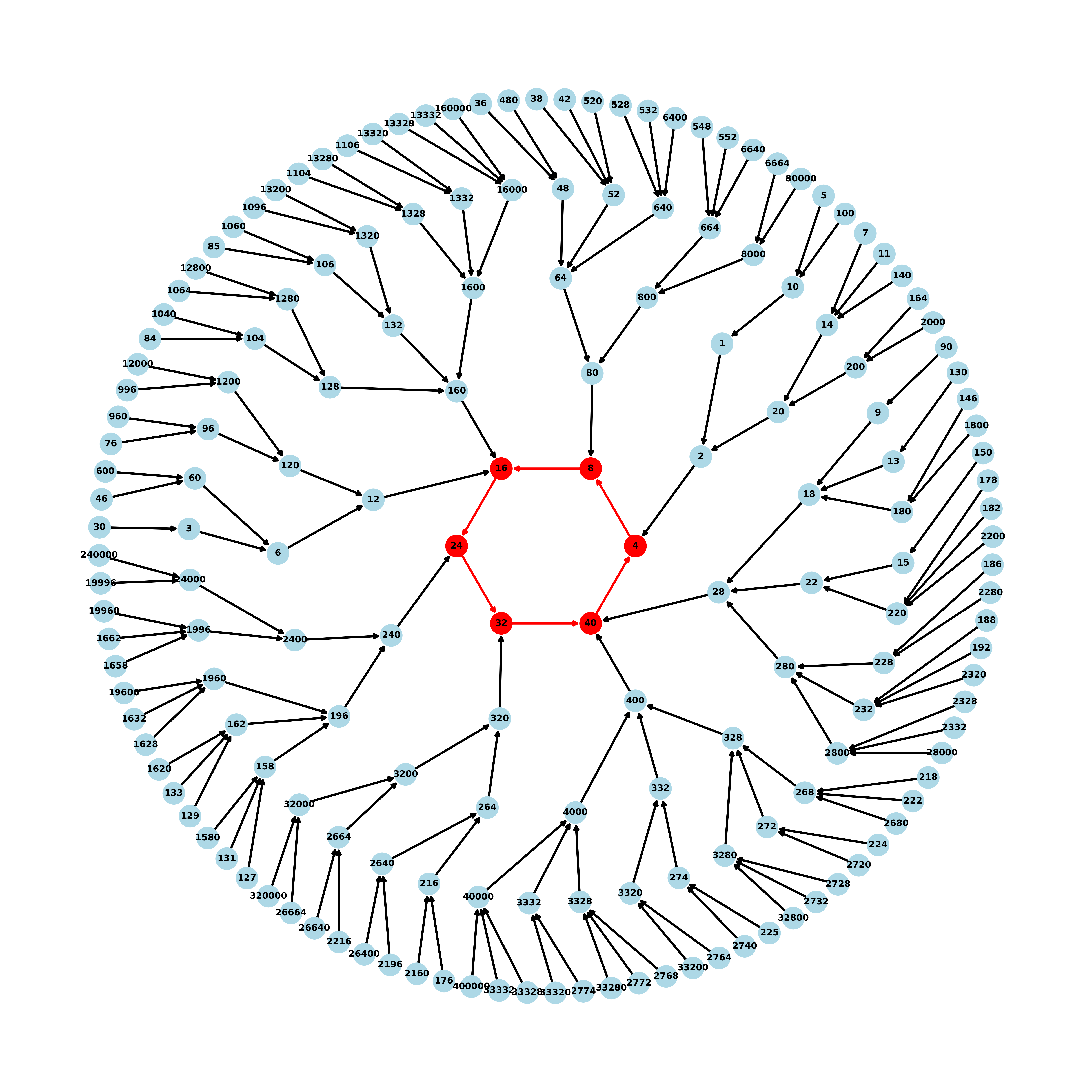}
	\end{tabular}
	\caption{Tree Graph  with its unique cycle corresponding to the triplet  $(2,3,1)_+$ (left) and to triplet $(10,12,8)_+$.}\label{fig_Graph231_10128}
\end{figure}

\begin{figure}[h]%
	\centering
	\begin{tabular}{cc}
		\includegraphics[width=8cm,height=8cm]{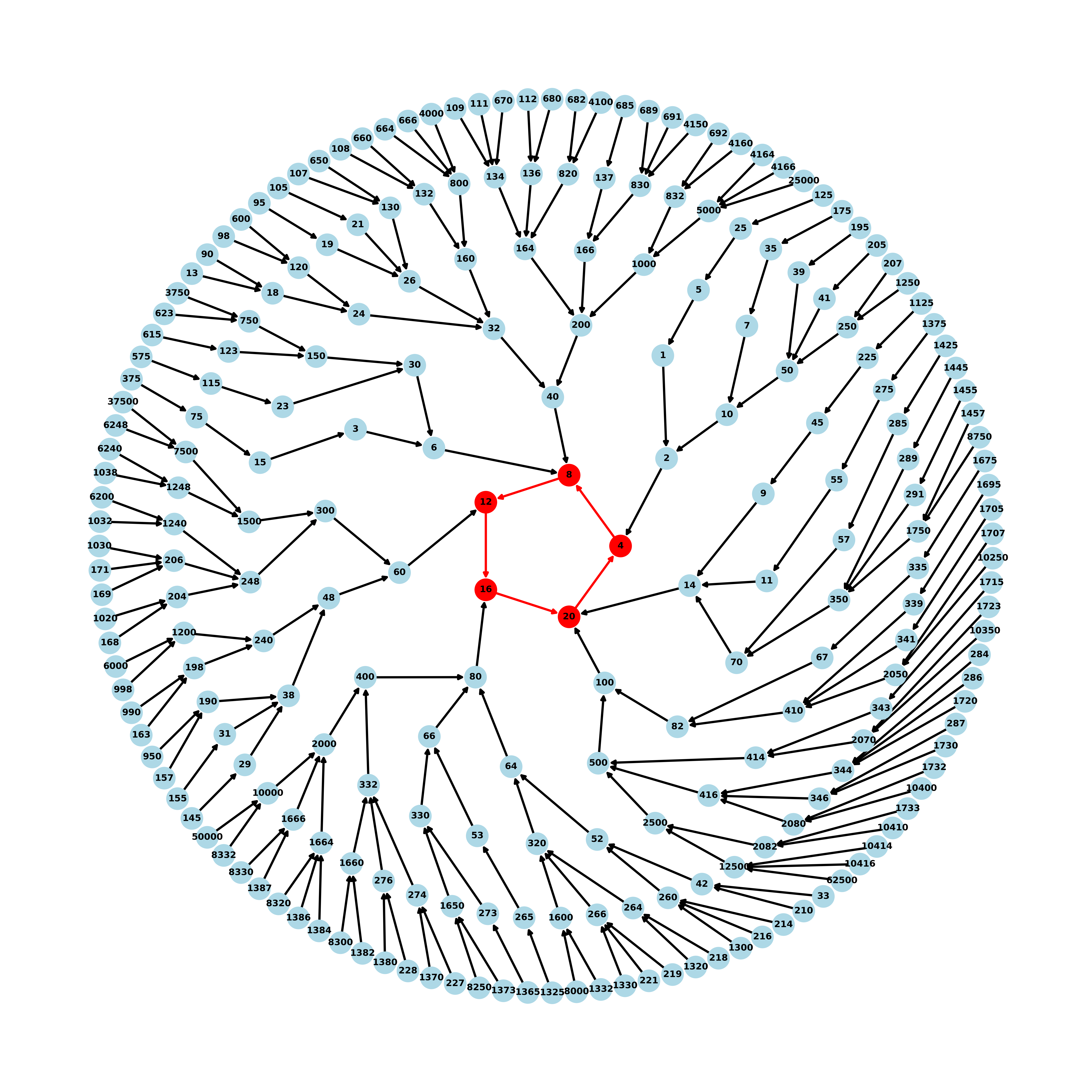}&
		\includegraphics[width=8cm,height=8cm]{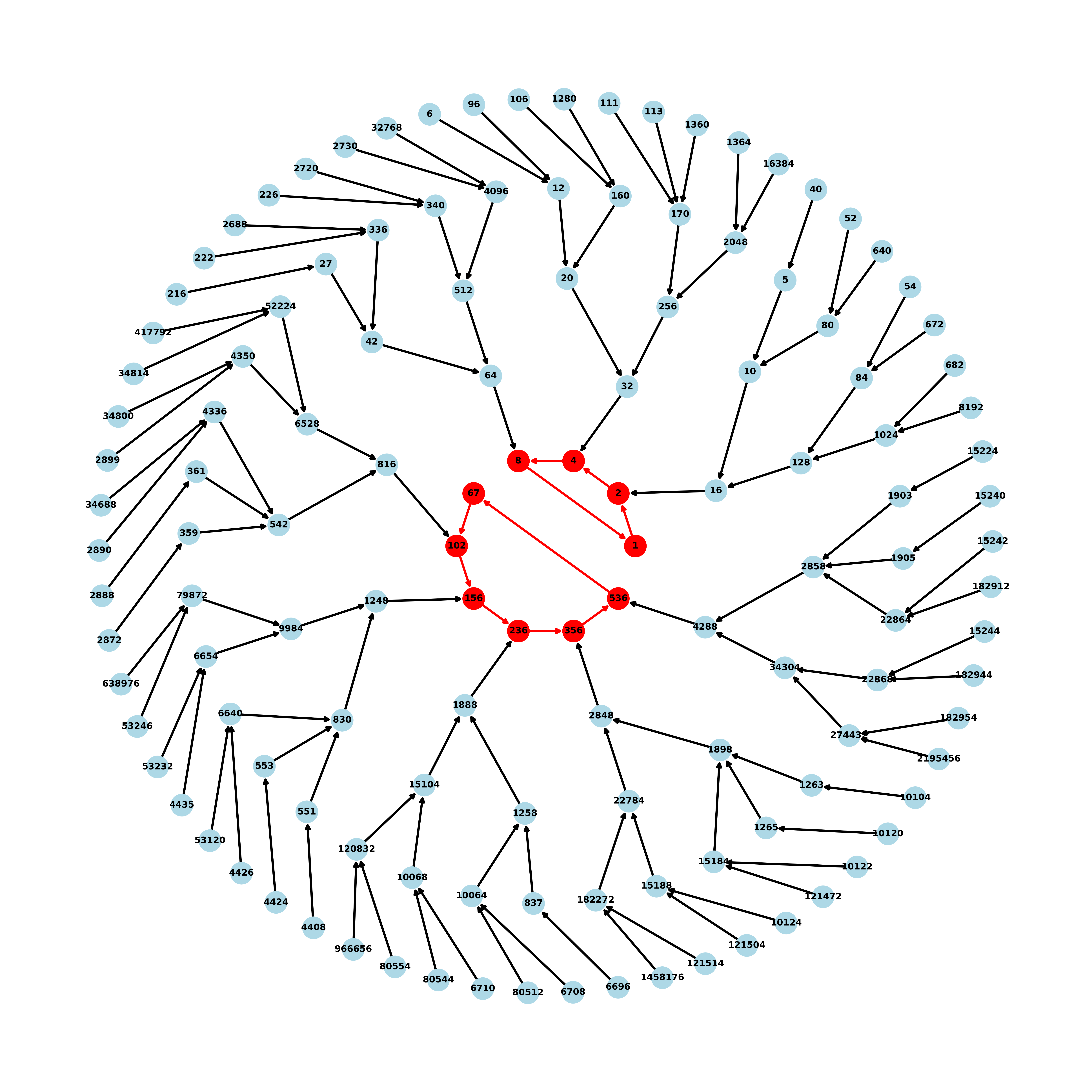}
	\end{tabular}
	\caption{Tree Graph  with its unique cycle corresponding to the triplet  $(5,6,4)_+$ (left) and to triplet $(8,12,4)_+$ with its two cycles.}\label{fig_Graph564_8124}
\end{figure}

\begin{figure}[h]%
	\centering
	\begin{tabular}{cc}
		\includegraphics[width=8cm,height=8cm]{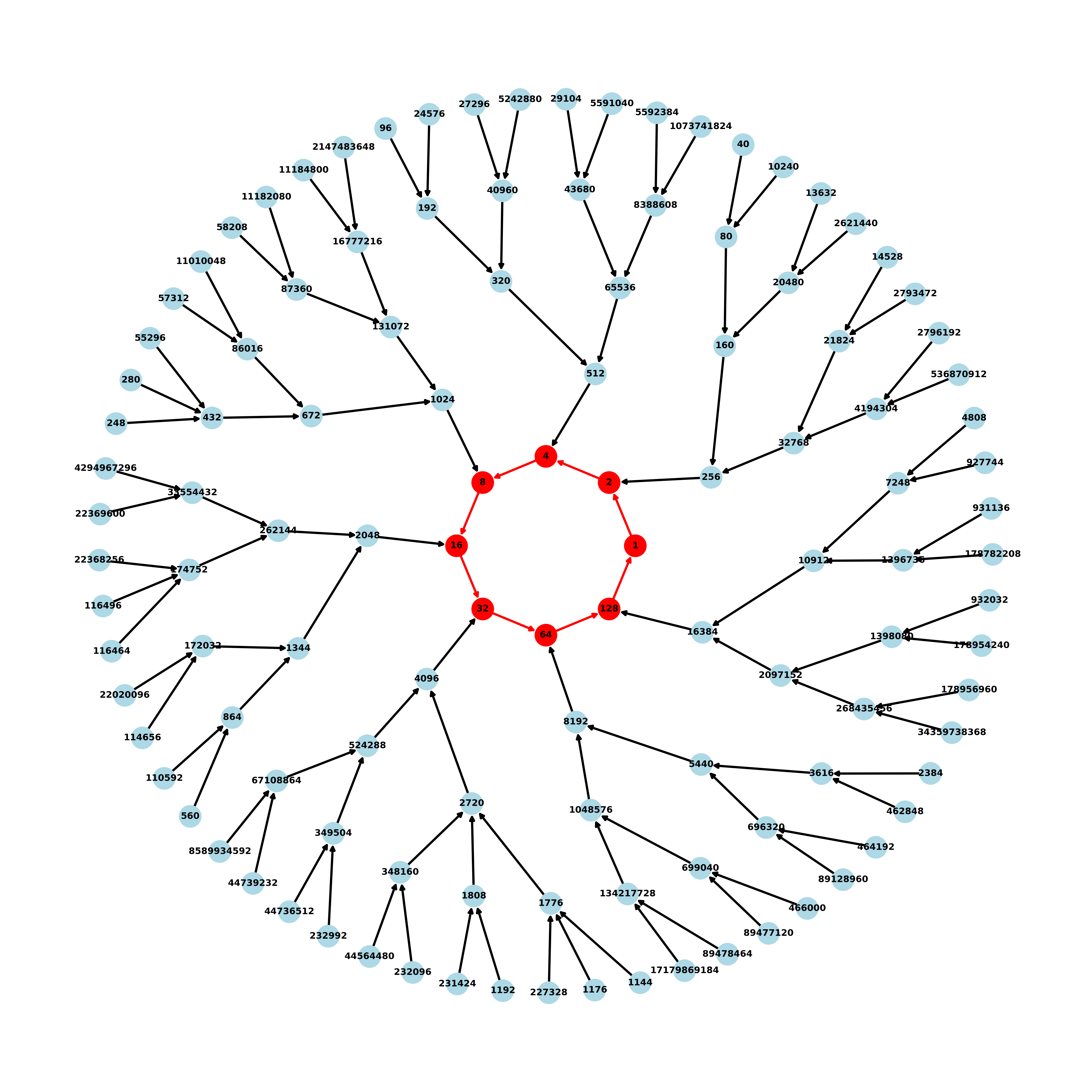}&
		\includegraphics[width=8cm,height=8cm]{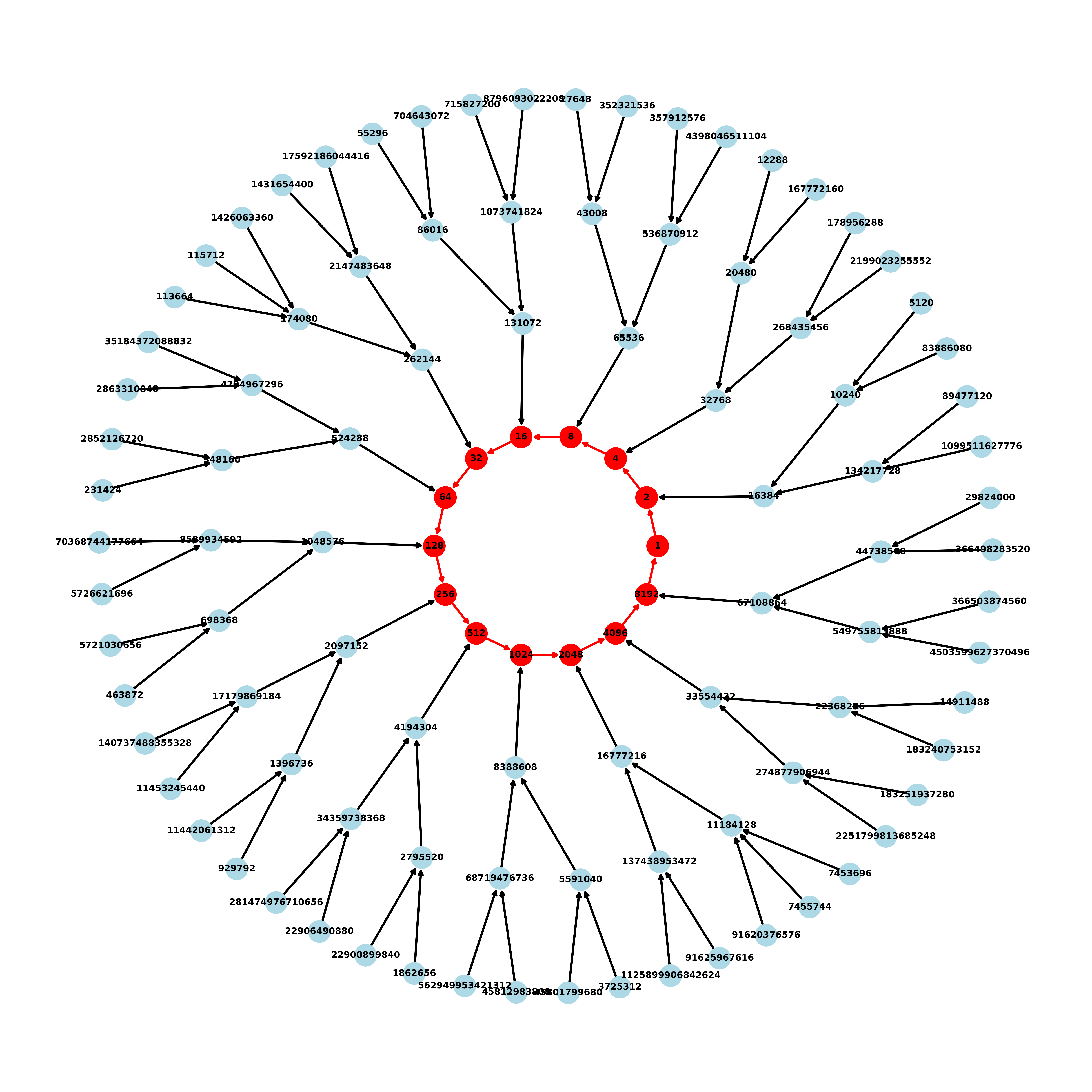}
	\end{tabular}
	\caption{Tree Graph  with its unique cycle corresponding to the triplet  $(128,192,64)_+$ (left) and to triplet $(8192,12288,4096)_+$.}\label{fig_Graph128-192-64,8192-12288-4096}
\end{figure}

\begin{algorithm}[thbp!]
	\caption{: Backward mapping of $T$ corresponding to $(d,\alpha, \beta,\kappa_0)$}
	\label{BackwardMap}
	\begin{algorithmic}[1]
		\State {\bf Input:}		An integer $n\geq 1$, The parameters $(d,\alpha, \beta,\kappa_0)$
		\State {\bf Output:}		The set  $S=\{m\in\mathbb{N}\;:\; T(m)=n\}$
		\State $S =\{d*n\}$
		\For {$r=1,\ldots,d-1$ }
		\State $R=r$
		\If {$\kappa_0=-1$}
		\State $R= d-r$
		\EndIf
		\State $m = d*n-\beta*R$
		\If{$\modd{m}{0}{\alpha}$}
		\State $m =m/\alpha$
		\If{$m>0$ and $\modd{m}{r}{d}$}
		\State   $S = S\cup \{m\}$
		\EndIf
		\EndIf
		\EndFor
		\State{\bf return} $S$
	\end{algorithmic}
\end{algorithm} 

The backward mapping provides a powerful and unifying tool for the study of Collatz-type dynamical systems. By focusing on the preimages of a given integer rather than on forward trajectories, it reveals the global structure of the dynamics and naturally leads to the construction of inverse graphs. In the general setting of Collatz maps, backward mapping is particularly useful for identifying admissible congruence classes, describing branching mechanisms, and comparing different parameterized models within a common framework. Moreover, inverse graph generation offers an effective approach for numerical exploration, allowing one to test conjectures, detect exceptional structures, and analyze the global behavior of generalized Collatz dynamics. From this perspective, the classical Collatz conjecture may be interpreted as the statement that every positive integer belongs to the backward orbit of the trivial cycle. This observation motivates the introduction of a generalized backward mapping, adapted to the map $T$ given by \eqref{mapT} and 
associated to the triplet $(d,\alpha,\beta)_{\pmb{\pm}}$.
Let $T^{-1}$ denotes the backward mapping of $T$, the  algorithm~\ref{BackwardMap} returns  the set $S=T^{-1}(n)$ for a give integer $n$ where $S$ is the set $S=\{m\in\mathbb{N}\;:\; T(m)=n\}$. By using  Algorithm~\ref{BackwardMap} it is possible to draw a directed graph  with the nodes are some integers and the  edges are $m\rightarrow n$ such that $T(m)=n$.

In figure~\ref{fig_Graph231_10128}, the graphe for  the triples $(2,3,1)_+$ and  $(10,12,8)_+$ corresponding to $(p,q)=(0,0)$ and $(p,q)=(3,1)$, respectively. In figure~\ref{fig_Graph564_81324}, the graphe for  the triples $(5,6,4)_+$ and  $(8,12,4)_+$ corresponding to $(p,q)=(2,0)$ and $(p,q)=(2,2)$, respectively.  In figure~\ref{fig_Graph128-192-64,8192-12288-4096}, the graph corresponds to the triplets
$(128,192,64)_+$  and$(8192,12288,4096)_+$ for large values of $p$ and $q$, with $(p,q)=(6,6)$ and 
$(p,q)=(12,12)$, respectively.

\section{Conclusion}
In this article, we have explored a new perspective on the Collatz problem through the introduction of an explicit example of the Collatz iteration modulo $10$. This modular framework allowed us to formulate a conjecture that unifies the classical Collatz conjecture with its modulo $10$ counterpart, while clearly identifying and characterizing the associated trivial cycles. By embedding both settings into a common structure, we highlighted the coherence between the classical dynamics and their modular analogues.
We further extended the analysis by studying the notion of total stopping time in a more general setting, providing insights into how stopping-time behavior can be examined beyond the general Collatz map. The introduction of the backward mapping may  be a powerful tool, enabling the systematic construction of directed graphs that reveal the underlying cycle structure of the iterations. These graphs offer a clear visualization of how trajectories evolve and how cycles emerge within the generalized framework.
Finally, we presented a brief computational analysis supporting the validity of the specific conjecture modulo $10$ as well as the proposed generalized conjecture. Although computational evidence cannot replace a rigorous proof, the numerical results reinforce the plausibility of the conjectures and illustrate the effectiveness of the backward approach in exploring large classes of trajectories.
Further investigations are currently in progress and will be addressed in future work. These ongoing studies aim to deepen the theoretical understanding of the proposed framework, extend the modular analysis to other bases, and push the limits of validation tests to higher values.


\end{document}